\newcommand{\op}{\textup{op}}
\newcommand{\id}{\textup{id}}
\newcommand{\ob}{\textup{ob}}
\newcommand{\sk}{\textup{sk}}
\newcommand{\cosk}{\textup{cosk}}
\newcommand{\tr}{\textup{tr}}
\newcommand{\ho}{\textup{ho}}
\newcommand{\nerve}{\textup{nerve}}
\newcommand{\cast}{\textup{cast}}
\newcommand{\lan}{\textup{lan}}
\newcommand{\Hom}{\textup{Hom}}
\newcommand{\cat}[1]{\textup{\textsf{#1}}}
\newcommand{\NN}{\mathbb{N}}
\DeclareMathAlphabet{\mathbbe}{U}{bbold}{m}{n}
\newcommand{\cA}{\mathcal{A}}
\newcommand{\cE}{\mathcal{E}}
\newcommand{\DDelta}{\mathbbe{\Delta}}
\newcommand{\Fin}{\cat{Fin}}
\newcommand{\Cat}{\mathcal{C}\cat{at}}
\newcommand{\Set}{\mathcal{S}\cat{et}}
\newcommand{\Quiv}{\mathcal{Q}\cat{uiv}}
\newcommand{\QCat}{\mathcal{QC}\cat{at}}
\newcommand{\rQuiv}{\cat{r}\mathcal{Q}\cat{uiv}}
\newcommand{\rightharpoons}{\mathrel{\mathrlap{\raisebox{0.7pt}{\ensuremath{\rightharpoonup}}}{\raisebox{-0.7pt}{\ensuremath{\rightharpoondown}}}}}
\font\maljapanese=dmjhira at 2ex 
\def\yo{\textrm{\maljapanese\char"48}}
\definecolor{keywordcolor}{rgb}{0.7, 0.1, 0.1}   
\definecolor{tacticcolor}{rgb}{0.0, 0.1, 0.6}    
\definecolor{commentcolor}{rgb}{0.4, 0.4, 0.4}   
\definecolor{symbolcolor}{rgb}{0.0, 0.1, 0.6}    
\definecolor{sortcolor}{rgb}{0.1, 0.5, 0.1}      
\definecolor{attributecolor}{rgb}{0.7, 0.1, 0.1} 
\newcommand{\fixme}[1]{}
\newcommand{\liboneLab}{\href{https://1lab.dev/}{\textsf{1Lab}}}
\newcommand{\libmathlib}{\href{https://github.com/leanprover-community/mathlib}{\textsf{mathlib}}}
\newcommand{\libunimath}{\href{https://github.com/UniMath/UniMath}{\textsf{UniMath}}}
\newcommand{\ldoc}[2][]{\href{https://leanprover-community.github.io/mathlib4_docs/find/?pattern=#1#2\#doc}{\texttt{#2}}}
\newcommand{\cdoc}[2][]{\href{https://leanprover-community.github.io/mathlib4_docs/find/?pattern=CategoryTheory.#1#2\#doc}{\texttt{#2}}}
\newcommand{\ldocTwo}[3][]{\href{https://leanprover-community.github.io/mathlib4_docs/find/?pattern=#1#2\%E2\%82\%82#3\#doc}{\texttt{#2${}_2$#3}}}
\newcommand{\cdocTwo}[3][]{\href{https://leanprover-community.github.io/mathlib4_docs/find/?pattern=CategoryTheory.#1#2\%E2\%82\%82#3\#doc}{\texttt{#2${}_2$#3}}}
\let\c@equation\c@theorem
\title{Formalizing colimits in $\Cat$}
\author{Mario Carneiro}{Chalmers University of Technology, Sweden} {marioc@chalmers.se}{https://orcid.org/0000-0002-0470-5249}{}
\author{Emily Riehl\footnote{Corresponding author}}{Department of Mathematics, Johns Hopkins University, 3400 N Charles Street, Baltimore, MD, USA }{eriehl@jhu.edu}{https://orcid.org/0000-0002-8465-8859}{NSF DMS-2204304, AFOSR FA9550-21-1-0009, ARO W911NF-20-1-0082}
\authorrunning{M. Carneiro and E. Riehl} 
\keywords{category theory, infinity-category theory, nerve, simplicial set, colimit} 
\begin{document}

\maketitle

\begin{abstract}
  Certain results involving ``higher structures'' are not currently accessible to computer formalization because the prerequisite $\infty${\nobreakdash-}category theory has not been formalized. To support future work on formalizing $\infty${\nobreakdash-}category theory in Lean's mathematics library, we formalize some fundamental constructions involving the 1-category of categories. Specifically, we construct the left adjoint to the nerve embedding of categories into simplicial sets, defining the homotopy category functor. We prove further that this adjunction is reflective, which allows us to conclude that $\Cat$ has colimits. To our knowledge this is the first formalized proof that the nerve functor is a fully faithful right adjoint and that the category of categories is cocomplete.

\end{abstract}

\maketitle


\section{Introduction}

One of the myriad ways in which ``category theory eats itself'' is that the categorically-defined notions of limits and colimits can be interpreted in the category $\Cat$ itself, whose objects are categories\footnote{An important point that is often elided on paper but is necessary to treat precisely in formalization is that the category of categories \cdoc{Cat}.\lstinline|\{v,u\}| is parametrized by two implicit universe levels, one \lstinline|u| for the type of objects and one \lstinline|v| for the hom-types.} and functors between them. Any suitably-small diagram of categories and functors has a \emph{limit} in $\Cat$ which is constructed by forming the limit of the corresponding diagrams of objects and hom-sets.\footnote{
Specifically, the category \cdoc{Cat}.\lstinline|\{u,u\}| of small categories at any universe level \lstinline|u|, has all limits indexed by categories $J$ in \cdoc{Cat}.\lstinline|\{u,u\}|.  The definition of \emph{limit} is reviewed in \S\ref{sec:strict-segal}; \emph{colimits} are dual, formed by reversing the arrows.} For example, the \emph{product} of two categories $C$ and $D$ is the category whose objects are defined as the product $\ob\,\,C \times \ob\,D$ of the types of objects of $C$ and $D$. Given objects $c,c' : \ob\,C$ and $d,d' : \ob\,D$, the hom-type from $(c,d)$ to $(c',d')$ in the product category is the product $\Hom_C(c,c') \times \Hom_D(d,d')$ of the hom-types in $C$ and $D$.
The generalization of this construction to arbitrary diagram shapes is formalized in Lean's \libmathlib{} as \cdoc{Cat.instHasLimits} by Kim Morrison \cite{Morrison}.

It is also the case that any suitably-small diagram in $\Cat$ has a colimit, but the construction is considerably more complicated. To illustrate, consider the terminal category $\Fin(1)$ (with one object and only its identity arrow) and the walking arrow $\Fin(2)$ (with two objects $0$ and $1$ and a single non-identity morphism $\ell \colon 0 \to 1$). The \emph{coequalizer} of the parallel pair of functors $0,1 \colon \Fin(1) \to \Fin(2)$ that pick out these two objects is the category $\cat{B}\NN$ with a single object but with a morphism for each natural number, with composition defined by addition and the morphism $0$ serving as the identity. The quotient functor $q \colon \Fin(2) \to \cat{B}\NN$ sends the morphism $\ell$ to the generating non-identity morphism $1$.
\begin{equation}\label{eq:first-example} \begin{tikzcd} \Fin(1) \arrow[r, shift left=.4em, "0"] \arrow[r, shift right=.4em, "1"'] & \Fin(2) \arrow[r, dashed, "q"] & \cat{B}\NN \end{tikzcd}  \qquad  \begin{tikzcd}[row sep=tiny, column sep=large] & \bullet \arrow[dd, "\ell"] \arrow[dr, mapsto, dashed]\\ \bullet \arrow[ur, mapsto, "0"] \arrow[dr, mapsto, "1"'] & & \bullet  \arrow[loop above, "1"] \arrow[loop below, "\cdots"] \arrow[loop right, "2"]  \\ & \bullet \arrow[ur, mapsto, dashed]\end{tikzcd} \end{equation}

Given the much greater complexity involved in the construction of colimits in $\Cat$, it is perhaps unsurprising that none of the libraries of formal mathematics that currently exist in Agda \cite{hott-in:agda, HuCarette2021, 1Lab, agda-unimath, VezzosiMoertbergAbel2021}, HOL Light \cite{Harrison2009}, Isabelle \cite{CZH_Foundations-AFP, CZH_Elementary_Categories-AFP, CZH_Universal_Constructions-AFP, NipkoWenzelPaulson2002}, Lean \cite{mathlib2020}, or Rocq \cite{BauerGrossLumsdaineShulmanSozeauSpitters2017,GrossChlipalaSpivak, UniMath} contain a proof that $\Cat$ is cocomplete. In particular, we discovered that this result was missing from Lean's \libmathlib{} while attempting to formalize a theorem that is relevant to $\infty${\nobreakdash-}\emph{category theory}, the extension of ordinary category theory to settings where there are ``higher homotopies'' between maps.\footnote{As noted recently by the authors of \cite{KRW}, ``myriad recent results from algebraic K-theory \cite{BlumbergGepnerTabuada2013}, derived and spectral algebraic geometry \cite{Lurie2004,Lurie:2018sag}, the Langlands program \cite{FarguesScholze2021}, and symplectic geometry \cite{NadlerTanaka2020} are inaccessible to formalization'' because \libmathlib{}  contains relatively few results from $\infty${\nobreakdash-}\emph{category theory}.} When this project commenced in Summer 2024, \libmathlib{} contained an important 1-categorical pre-requisite to $\infty${\nobreakdash-}category theory, namely the construction of the \emph{nerve} of a 1-category, formalized by Jo\"{e}l Riou. 

The nerve of a category~$C$, is a \emph{simplicial set}, a family of types $(\nerve\;C_n)_{n : \NN}$ equipped with various maps between them. More precisely, a \emph{simplicial set} is a contravariant functor indexed by the \ldoc{SimplexCategory} $\DDelta$; objects are natural numbers $n : \NN$ commonly denoted by $[n]$ and morphisms $[m] \to [n]$ are defined in \ldoc{SimplexCategory.Hom} to be order-preserving functions $\Fin(m+1) \to \Fin(n+1)$. The category \ldoc{SSet} of simplicial sets is the functor category $\Set^{\DDelta^\op}$. Explicitly, $\nerve\ C_n$ is the type of functors $\Fin(n+1)\to C$, where $\Fin(n+1)$ is the ordinal category with $n+1$ objects depicted below:
\[ \begin{tikzcd} 0 \arrow[r] & 1 \arrow[r]  & \cdots \arrow[r] & n-1 \arrow[r] &  n \end{tikzcd} \]
\begin{lstlisting}
/-- `ComposableArrows C n` is the type of functors `Fin (n + 1) ⥤ C`. -/
abbrev !\cdoc{ComposableArrows}! (n : ℕ) := !\ldoc{Fin}! (n + 1) ⥤ C

/-- The nerve of a category -/
def !\cdoc{nerve}! (C : Type) [Category C] : !\ldoc{SSet}! where
  obj Δ := !\cdoc{ComposableArrows}! C (Δ.unop.len)
  map f x := x.!\cdoc[ComposableArrows.]{whiskerLeft}! (!\ldoc{SimplexCategory.toCat}!.map f.unop)
\end{lstlisting}

Up to equivalence, $\nerve\ C_0$ is the type of objects of $C$, $\nerve\ C_1$ is the type of arrows of $C$, $\nerve\ C_2$ is the type of composable pairs of arrows, and so on. In the presence of the ``higher homotopies'' alluded to above, this data is used to index ``homotopy coherent diagrams'' of shape $C$, incarnating the 1-category $C$ as an $\infty${\nobreakdash-}category.

The nerve construction defines a functor $\nerve \colon \Cat \to \Set^{\DDelta^\op}\!$ valued in the category of simplicial sets. Our objective was to define its left adjoint $\ho \colon \Set^{\DDelta^\op}\!\! \to \Cat$, a functor which takes a simplicial set to its \emph{homotopy category}. This functor is used to collapse an $\infty${\nobreakdash-}category to its quotient 1-category and will also be used to define the 2-category of $\infty${\nobreakdash-}categories described in \S\ref{sec:future}. As noted for instance in \cite[6.5.iv]{Riehl:2016cc}, this adjunction exists for formal reasons as a special case of a result which has already been formalized in \libmathlib{}. Bhavik Mehta and Jo\"{e}l Riou formalized the construction of a general family of ``Yoneda adjunctions'':\footnote{We've simplified the statement here for readability. The only hypothesis that is actually required is that the left Kan extension of the functor $A$ along the Yoneda embedding $\yo \colon \cA \to \Set^{\cA^\op}$ exists. In practice, one proves that this left Kan extension exists by showing that $\cA$ is small and $\cE$ is cocomplete.}

\begin{theorem}[\cdoc{Presheaf.yonedaAdjunction}] Suppose $A \colon \cA \to \cE$ is any functor,  where $\cA$ is small and $\cE$ is cocomplete.
 Then the left Kan extension of $A$ along the Yoneda embedding $\yo \colon \cA \to \Set^{\cA^\op}$ is left adjoint to the restricted Yoneda functor:
  $ \begin{tikzcd}[column sep=large] \cE \arrow[r, bend right=20, "{\cE(A-,-)}"' pos=.6] \arrow[r, phantom, "\bot"] & \Set^{\cA^\op}\!\!. \arrow[l, bend right=20, "{\lan_{\yo}A}"' pos=.4]
  \end{tikzcd}$
\end{theorem}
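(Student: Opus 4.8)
The plan is to construct the adjunction by exhibiting, for each presheaf $X \in \Set^{\cA^\op}$ and each object $e \in \cE$, a bijection $\cE\bigl((\lan_{\yo}A)(X), e\bigr) \cong \Set^{\cA^\op}\bigl(X, \cE(A-,e)\bigr)$ that is natural in both variables, and then invoking the standard packaging of such two-variable hom-isomorphisms into an adjunction. The key input is the pointwise formula for the left Kan extension: because $\cA$ is small and $\cE$ is cocomplete, $\lan_{\yo}A$ exists and is computed at $X$ as the colimit of $\int X \xrightarrow{\pi} \cA \xrightarrow{A} \cE$, where $\int X$ is the category of elements of $X$ (equivalently the comma category $\yo \downarrow X$) and $\pi$ its projection to $\cA$; this colimit exists in $\cE$ since $\int X$ is small.

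First I would use the universal property of the colimit to rewrite
$\cE\bigl((\lan_{\yo}A)(X), e\bigr) = \cE\bigl(\colim_{(a,x) \in \int X} Aa,\, e\bigr) \cong \lim_{(a,x) \in \int X} \cE(Aa, e) = \lim_{(a,x) \in \int X} \cE(A-,e)(a)$.
Then I would prove the crucial identification: for any presheaf $Y \colon \cA^\op \to \Set$ there is a bijection $\lim_{(a,x) \in \int X} Y(a) \cong \Set^{\cA^\op}(X, Y)$, natural in $X$ and $Y$. This is a direct unwinding of definitions --- a family $(\phi_{a,x} \in Y(a))_{(a,x)}$ compatible with the morphisms of $\int X$ is precisely a natural transformation $X \Rightarrow Y$ whose component at $a$ sends $x \in X(a)$ to $\phi_{a,x}$, the naturality squares being exactly the compatibility conditions --- and it is one incarnation of the density (co-Yoneda) theorem $X \cong \colim_{\int X}\yo$ combined with the Yoneda lemma. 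Specializing $Y = \cE(A-,e)$ and composing the two bijections yields the desired hom-isomorphism; it then remains only to check naturality in $e$, which is immediate because every step above is visibly natural in $e$.

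The hard part will be the bookkeeping around the category of elements: getting the colimit--limit duality, the limit-of-evaluations computation, and the two naturality checks to cohere --- and in a formalization this means threading carefully through the existing API for categories of elements, the tautological cocone presenting a presheaf as a colimit of representables, and the pointwise-left-Kan-extension machinery, all while staying disciplined about universe levels (the presheaf is $\Set$-valued at the object universe of $\cA$, the category of elements lives at that level, and cocompleteness of $\cE$ must be invoked for colimits of exactly that size). A viable alternative that avoids naming the colimit formula is to verify directly that $X \mapsto \colim_{\int X} A\pi$, equipped with the evident unit $A \Rightarrow \bigl(X \mapsto \colim_{\int X}A\pi\bigr)\circ\yo$, satisfies the universal property defining $\lan_{\yo}A$ and is simultaneously left adjoint to $\cE(A-,-)$; but on any route the co-Yoneda computation of the previous paragraph is the crux, and I expect setting it up rigorously to consume the bulk of the effort.
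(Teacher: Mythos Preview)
Your argument is correct and is the standard textbook proof of this result (via the pointwise colimit formula for left Kan extensions combined with the density/co-Yoneda identification). However, there is no proof in the paper to compare it against: the theorem is cited as a result already formalized in \libmathlib{} by Bhavik Mehta and Jo\"{e}l Riou (\cdoc{Presheaf.yonedaAdjunction}), and the paper merely invokes it as background. Indeed, the paper's point is precisely that this ready-made result could \emph{not} be applied to produce the nerve adjunction, because applying it with $\cA = \DDelta$ and $\cE = \Cat$ would require knowing that $\Cat$ is cocomplete---which was the very fact the authors were trying to establish. So your write-up is a sound proof of a theorem the paper deliberately treats as a black box.
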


The nerve adjunction is an example of a Yoneda adjunction defined relative to the functor $\ldoc{SimplexCategory.toCat} \colon \DDelta \to \Cat$ which sends $n : \NN$ to the ordinal category $\Fin(n+1)$. However, we could not apply this result because \libmathlib{} did not contain a proof that $\Cat$ has all colimits, only a proof that $\Cat$ has limits, which are far easier to construct.  Consulting the literature again \cite[4.5.16]{Riehl:2016cc}, we were reminded that colimits in $\Cat$ can be constructed as a corollary of the adjunction we were hoping to define: since the nerve functor is fully faithful, the adjunction exhibits $\Cat$ as a reflective subcategory of a cocomplete category. Thus our new formalization objective was to give a direct proof of the following pair of results:

\begin{theorem}[\cdoc{nerveAdjunction}]\label{thm:nerveAdj} The nerve functor admits a left adjoint defined by the functor that sends a simplicial set to its homotopy category:
$ \begin{tikzcd}[column sep=large] \Cat \arrow[r, bend right=20, "{\nerve}"' pos=.6] \arrow[r, phantom, "\bot"] & \Set^{\DDelta^\op}\!. \arrow[l, bend right=20, dashed, "{\ho}"' pos=.45]
\end{tikzcd}$
\end{theorem}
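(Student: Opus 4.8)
The plan is to construct the left adjoint $\ho$ directly, by giving an explicit ``generators and relations'' presentation of the homotopy category of a simplicial set and then exhibiting the adjunction as a natural bijection of hom-sets. One cannot simply appeal to \cdoc{Presheaf.yonedaAdjunction}: the nerve is indeed the restricted Yoneda functor associated with $\ldoc{SimplexCategory.toCat}\colon\DDelta\to\Cat$, but the existence of the relevant left Kan extension presupposes that $\Cat$ is cocomplete, and cocompleteness of $\Cat$ is exactly what we eventually want to \emph{deduce} from this adjunction. So a hands-on construction is unavoidable.

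To build $\ho X$, I would first pass to the $1$-truncation of $X$ viewed as a reflexive quiver: its vertices are the $0$-simplices $X_0$, an edge $a\to b$ is a $1$-simplex $f\in X_1$ with $d_1 f = a$ and $d_0 f = b$ (recall that $d_1$ selects the source and $d_0$ the target), and the distinguished loop at a vertex $a$ is the degenerate simplex $s_0 a$. Applying the free-category functor $\rQuiv\to\Cat$---left adjoint to the functor sending a category to its underlying reflexive quiver---yields the category whose morphisms are paths of composable $1$-simplices. Then $\ho X$ is the quotient of this free category by the congruence generated by the relation $d_0\sigma\circ d_2\sigma = d_1\sigma$, one relation for each $2$-simplex $\sigma\in X_2$. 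Working with reflexive quivers rather than bare quivers is what forces degenerate edges to become identities, so that no separate unit relations are needed. Functoriality of $X\mapsto\ho X$ and the construction of a comparison map $\eta_X\colon X\to\nerve(\ho X)$ of simplicial sets are then routine.

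For the adjunction itself I would establish a bijection
\[
  \Hom_{\Cat}(\ho X, C)\;\cong\;\Hom_{\sSet}(X,\nerve C)
\]
natural in both the simplicial set $X$ and the category $C$. Both sides are identified with a single piece of concrete data: a morphism of reflexive quivers from the $1$-truncation of $X$ to the underlying reflexive quiver of $C$, subject to the condition that every $2$-simplex of $X$ is sent to a commutative triangle in $C$. For the left-hand side this follows at once from the universal properties of the free category on a reflexive quiver and of a quotient category. For the right-hand side, one observes that a simplicial map $X\to\nerve C$ is determined by its action on $0$- and $1$-simplices, because an $n$-simplex of $\nerve C$ is a string of $n$ composable arrows that is pinned down by its $n$ consecutive edges; existence of the extension to all simplices requires exactly the commutative-triangle condition on $2$-simplices---this is the $n=2$ instance of the simpliciality equations, and the remaining instances reduce to it together with the reflexive-quiver relations. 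Naturality in both variables is then a direct check. (One could instead write down an explicit counit $\ho(\nerve C)\to C$ alongside $\eta$ and verify the two triangle identities, but the hom-set description appears easier to formalize.)

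The principal obstacle is organizational rather than conceptual: it is the bookkeeping around indexing conventions---\cdoc{ComposableArrows} being $\Fin(n+1)$-indexed functors, and the matching of face and degeneracy maps against source, target, composition and identities---together with the proofs that the quotient passes to a functor and that the hom-set bijection is natural in both variables. The one step that is not purely formal is the claim that a simplicial map into a nerve is determined by, and can be reconstructed from, its low-dimensional part; this is where the simplicial identities in $X$ and the $2$-simplex conditions must be used with care. Encapsulating the low-dimensional data as a reflexive quiver, and isolating the ``sends $2$-simplices to commutative triangles'' predicate, is the key organizing device that keeps the verification tractable.
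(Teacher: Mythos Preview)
Your approach is correct and your definition of $\ho X$ coincides with the paper's (it is literally $\ho_2(\tr_2 X)$), but the organization differs in two respects. First, the paper factors the adjunction as a composite
\[
\ho \dashv \nerve \;\;\text{from}\;\; (\ho_2 \dashv \nerve_2)\;\text{and}\;(\tr_2 \dashv \cosk_2),
\]
using a separately established natural isomorphism $\nerve \cong \cosk_2\circ\nerve_2$ (``nerves are 2-coskeletal''). This isolates your acknowledged non-formal step---that a simplicial map into a nerve is determined by and reconstructible from its data in degrees $\leq 2$---as a standalone reusable result, and it reduces all naturality checks to the \emph{finite} category $\DDelta_{\leq 2}$ (eight generating morphisms) rather than all of $\DDelta$. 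Second, the paper builds $\ho_2 \dashv \nerve_2$ via unit, counit, and triangle identities rather than a hom-set bijection; this choice is driven by formalization ergonomics, since the counit falls out of the universal property of the quotient while the unit is produced by an explicit lifting lemma (their Proposition~\ref{prop:to-nerve2-mk}) that packages exactly your ``reflexive prefunctor plus 2-simplex condition'' data. Your hom-set route is mathematically equivalent and arguably conceptually cleaner, but note that your remark that constructing $\eta_X\colon X\to\nerve(\ho X)$ is ``routine'' understates matters: defining $\eta_X$ in all simplicial degrees already requires the reconstruction-from-low-dimensions step, which is precisely the 2-coskeletality of the nerve that the paper proves via the strict Segal condition.
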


\begin{theorem}[\cdoc{nerveFunctor.fullyfaithful}]\label{thm:nerveFF} The nerve functor is full and faithful.
\end{theorem}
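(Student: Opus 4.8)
The plan is to equip the nerve functor with the data of a fully faithful functor---a \texttt{FullyFaithful} structure, in Lean's terminology---by constructing, naturally in $C$ and $D$, a two-sided inverse to the function $\Hom_{\Cat}(C,D) \to \Hom_{\Set^{\DDelta^\op}}(\nerve C, \nerve D)$ that sends a functor $F$ to the simplicial map $\nerve F$. Faithfulness is then nearly immediate: if $\nerve F = \nerve G$, comparing these simplicial maps in degree $0$ shows that $F$ and $G$ agree on objects (a functor out of the terminal category $\Fin(1)$ is just an object, so $\nerve\,C_0 \cong \ob\,C$), and comparing them in degree $1$ shows they agree on morphisms (a functor out of the walking arrow $\Fin(2)$ is just an arrow, so $\nerve\,C_1 \cong \mor\,C$); hence $F = G$.

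For the preimage construction underlying fullness, I would take a simplicial map $\alpha \colon \nerve C \to \nerve D$ and define a functor $F \colon C \to D$ by letting $F$ act on objects as $\alpha_0$ and on morphisms as $\alpha_1$, under the two identifications just mentioned. Naturality of $\alpha$ with respect to the two coface maps $[0] \to [1]$ forces $F$ to carry a morphism $c \to c'$ to a morphism $Fc \to Fc'$, so $F$ restricts to functions $\Hom_C(c,c') \to \Hom_D(Fc,Fc')$; naturality with respect to the codegeneracy $[1] \to [0]$ makes $F$ preserve identities; and naturality with respect to the three coface maps $[1] \to [2]$ makes $F$ preserve composition. For the last point, given composable $f \colon c_0 \to c_1$ and $g \colon c_1 \to c_2$, regard the pair as a $2$-simplex $\tau$ of $\nerve C$ whose two outer faces are $f$ and $g$ and whose inner face is $g \circ f$; naturality then identifies the outer faces of $\alpha_2(\tau) \in \nerve\,D_2$ with $Ff$ and $Fg$ and its inner face with $F(g \circ f)$, while the inner face of any $2$-simplex of a nerve is the composite of its outer faces, so $F(g \circ f) = Fg \circ Ff$ as required.

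It then remains to check that this preimage is a section of the nerve, i.e.\ that $\nerve F = \alpha$. The two simplicial maps agree in degrees $0$ and $1$ by construction, and in higher degrees I would use that an $n$-simplex of a nerve---a functor $\Fin(n+1) \to D$---is determined by its \emph{spine}, the list of its $n$ consecutive edges $i \to i+1$, since those morphisms generate the category $\Fin(n+1)$. For any $n$-simplex $\sigma$ of $\nerve C$, naturality of $\alpha$ along the $n$ edge inclusions $[1] \to [n]$ shows that the spine of $\alpha_n(\sigma)$ is obtained by applying $F$ to each edge of the spine of $\sigma$, which is exactly the spine of $(\nerve F)_n(\sigma) = F \circ \sigma$; hence the two $n$-simplices coincide.

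The one step I expect to carry genuine mathematical content is the preservation-of-composition clause: it is the only place where the category structure on $D$ is reconstructed from its nerve, and the only place the degree-$2$ simplicial identities are used. The remaining work should be bookkeeping in the formalization---moving between the literal description of $\nerve\,C_n$ as the type of functors $\Fin(n+1) \to C$ and the abstract naturality squares, and fixing indexing conventions so that $\Fin(1)$ and $\Fin(2)$ are recognized as the terminal category and the walking arrow and the relevant coface and codegeneracy maps are named correctly. A more abstract alternative would be to deduce the statement from Theorem~\ref{thm:nerveAdj}, using that a right adjoint is fully faithful precisely when the counit of its adjunction is a natural isomorphism; but verifying that the counit $\ho(\nerve C) \to C$ is invertible reduces to computing the homotopy category of a nerve, which does not appear to be any easier than the direct argument above.
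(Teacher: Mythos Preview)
Your argument is correct, but the paper organizes the proof differently. Rather than working directly with $\nerve$, the paper uses the factorization $\nerve \cong \cosk_2 \circ \nerve_2$ established in Proposition~\ref{prop:2-coskeletal}: since $\cosk_2$ is fully faithful (right Kan extension along a full-subcategory inclusion), it suffices to show that the 2-truncated nerve $\nerve_2$ is fully faithful. Faithfulness of $\nerve_2$ is deduced from the natural isomorphism $U^2_1\,\nerve_2 \cong U^r$ together with faithfulness of the underlying-reflexive-quiver functor $U^r$; fullness of $\nerve_2$ (Lemma~\ref{lem:nerve2-full}, discussed in \S\ref{sec:full}) is proven by essentially the same degree-0,1,2 construction you outline, but with $\nerve_2 F = \alpha$ checked only through degree~2 via the extensionality principle of Proposition~\ref{prop:to-nerve2-mk}(ii) rather than in all degrees via spines.

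Your route is the more elementary one: it avoids the truncated simplex category and coskeleton machinery entirely, and your spine argument is precisely the strict Segal condition of Proposition~\ref{prop:strict-segal} applied directly to $\nerve\,D$. The paper's route, by contrast, reuses the 2-coskeletality infrastructure already needed for Theorem~\ref{thm:nerveAdj}, so once that is in place no further work on higher simplices is required; it also confines the dependent-type bookkeeping (the heterogeneous equalities discussed in \S\ref{sec:full} and \S\ref{sec:naturality}) to finitely many naturality squares in $\DDelta_{\leq 2}$ rather than to a uniform-in-$n$ argument.
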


A functor is \emph{reflective} if it is a fully faithful right adjoint.
\begin{lstlisting}
/-- A functor is !{\color{commentcolor}\emph{*reflective*}}!, or !{\color{commentcolor}\emph{*a~reflective~inclusion*}}!, if it is fully faithful and right adjoint. -/
class !\cdoc{Reflective}! (R : D ⥤ C) extends R.!\cdoc[Functor.]{Full}!, R.!\cdoc[Functor.]{Faithful}! where
  /-- a choice of a left adjoint to `R' -/
  L : C ⥤ D
  /-- `R' is a right adjoint -/
  adj : L ⊣ R
\end{lstlisting}
As formalized by Kim Morrison, Jack McKoen, and Bhavik Mehta, any reflective subcategory of a cocomplete category has colimits.

\begin{lstlisting}
/-- If `C' has colimits then any reflective subcategory has colimits. -/
theorem !\cdoc{hasColimits\_of\_reflective}! (R : D ⥤ C) [!\cdoc{Reflective}! R]
    [!\cdoc[Limits.]{HasColimits}! C] : !\cdoc[Limits.]{HasColimits}! D := ...
\end{lstlisting}

Thus, as an immediate corollary of Theorems \ref{thm:nerveAdj} and \ref{thm:nerveFF}:\footnote{Most of the Lean statements here have been simplified to avoid asides about universes, but in this theorem statement it is important that we only assert the existence of \lstinline|u|-\emph{small} colimits in the category of \lstinline|u|-\emph{small} categories, which is indicated by the use of universe \lstinline|u| in both objects and homs.}
\begin{lstlisting}
/-- The category of small categories has all small colimits as a reflective
  subcategory of the category of simplicial sets, which has all small colimits. -/
instance : !\cdoc[Limits.]{HasColimits}! !\cdoc{Cat}!.{u, u} := !\cdoc{hasColimits\_of\_reflective}! !\cdoc{nerveFunctor}!
\end{lstlisting}

\begin{corollary}[\cdoc{Cat.instHasColimits}]\label{cor:cat-colimits} $\Cat$ is cocomplete.
\end{corollary}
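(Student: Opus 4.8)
The plan is to obtain cocompleteness of $\Cat$ as a purely formal consequence of Theorems~\ref{thm:nerveAdj} and~\ref{thm:nerveFF} together with facts already in \libmathlib{}. The key observation is that these two theorems together say exactly that the nerve functor $\nerve \colon \Cat \to \Set^{\DDelta^\op}$ is \emph{reflective} in the sense of the \cdoc{Reflective} type class: it is fully faithful and it is equipped with a left adjoint, namely the homotopy category functor $\ho$ with adjunction $\ho \dashv \nerve$. So the first step is to package these inputs into an instance of \cdoc{Reflective} for $\nerve$.

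The second step is to record that the category of simplicial sets is cocomplete. This is not special to the index category $\DDelta$: for any small category $\cA$ the presheaf category $\Set^{\cA^\op}$ has all small colimits, computed pointwise from colimits in $\Set$, and \libmathlib{} already provides the general instance that a functor category valued in a cocomplete category is itself cocomplete. Hence $\Set^{\DDelta^\op}$ has \cdoc[Limits.]{HasColimits}. Here one must be mildly careful with universe levels — we want $u$-small colimits in the category \cdoc{Cat}.\lstinline|\{u,u\}| of $u$-small categories, which pins down the universe at which the presheaf category must be taken — but this carries no mathematical content.

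The final step is to apply \cdoc{hasColimits\_of\_reflective} to $R = \nerve$: its codomain $\Set^{\DDelta^\op}$ is cocomplete and $\nerve$ is reflective, so its domain $\Cat$ has all small colimits. Concretely, the colimit of a diagram $F \colon J \to \Cat$ is obtained by forming the colimit of $\nerve \circ F$ in simplicial sets and applying $\ho$ to it, with the unit and counit of $\ho \dashv \nerve$ and the full faithfulness of $\nerve$ promoting this to a genuine colimit cocone. I expect no real obstacle in this corollary itself: all the mathematical work has been front-loaded into Theorems~\ref{thm:nerveAdj} and~\ref{thm:nerveFF} (and into the pointwise construction of presheaf colimits), so the corollary reduces to a short composition of existing \libmathlib{} lemmas, modulo the universe bookkeeping noted above.
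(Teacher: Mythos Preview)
Your proposal is correct and follows exactly the same route as the paper: package Theorems~\ref{thm:nerveAdj} and~\ref{thm:nerveFF} into a \cdoc{Reflective} instance for the nerve, use the pointwise cocompleteness of the presheaf category $\Set^{\DDelta^\op}$, and conclude via \cdoc{hasColimits\_of\_reflective}, with the same concrete description of the resulting colimits and the same caveat about universe levels.
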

\begin{proof}
The presheaf category $\Set^{\DDelta^\op}$ is cocomplete, with colimits defined objectwise in $\Set$. On account of the adjunction with a fully faithful right adjoint
$ \begin{tikzcd}[column sep=large] \Cat \arrow[r, bend right=20, hook, "{\nerve}"' pos=.6] \arrow[r, phantom, "\bot"] & \Set^{\DDelta^\op}\! ,\!\! \arrow[l, bend right=20, "{\ho}"' pos=.45]
\end{tikzcd}$  the reflective subcategory $\Cat$ inherits these colimits, defined by taking the nerves of the categories in the original diagram, forming the colimit in $\Set^{\DDelta^\op}\!\!$, and then applying the homotopy category functor to the colimit cone.
\end{proof}

Importantly for the aim of contributing to a general-purpose library, the construction given in the proof of Corollary \ref{cor:cat-colimits} results in a good description of colimits in $\Cat$. Applied to the example \eqref{eq:first-example}, Corollary \ref{cor:cat-colimits} tells us to first form the coequalizer in simplicial sets
\[ \begin{tikzcd} \Delta^0 \arrow[r, shift left=.4em, "0"] \arrow[r, shift right=.4em, "1"'] & \Delta^1 \arrow[r, dashed, "q"] & S^1 \end{tikzcd}\] and then take the homotopy category. As described in Definition \ref{defn:homotopy-category} below, the homotopy category of $S^1$ is the free category defined by the underlying reflexive quiver --- which has a single object and a single non-identity endo-arrow --- modulo relations witnessed by non-degenerate 2-simplices. As the simplicial set $S^1$ has no non-degenerate 2-simplices, $\ho(S^1) \cong \cat{B}\NN$ is the free category on a single non-identity endo-arrow, which is the category $\cat{B}\NN$ described above. The construction for other diagram shapes is similar.

Other methods for proving that $\Cat$ is cocomplete are discussed in \S\ref{sec:related-work}, on related work. These methods typically invoke the theorem that a colimit of a diagram of any shape can be expressed as a coequalizer between two coproducts \cite[V.2.2]{MacLane:1998cw}, so that it suffices to give explicit constructions of coproducts and coequalizers in $\Cat$. This works, but it results in more convoluted expressions for concrete colimits; for our running example, it would calculate the colimit $\cat{B}\NN$ as the coequalizer of a pair of functors between the coproducts
\[ \begin{tikzcd} \Fin(1) \sqcup \Fin(1) \sqcup \Fin(2) \arrow[r, shift left=.4em] \arrow[r, shift right=.4em] & \Fin(1) \sqcup \Fin(2) \arrow[r, dashed] & \cat{B}\NN \end{tikzcd}\]
which is more complicated than it needs to be.

In \S\ref{sec:formalization}, we describe our formalizations of Theorems \ref{thm:nerveAdj} and \ref{thm:nerveFF}. After a PR process that took five and a half months, this entire formalization is now a part of \libmathlib{}. In \S\ref{sec:challenges}, we describe some challenges we encountered along the way. In \S\ref{sec:related-work}, we discuss alternate theoretical approaches to the construction of colimits in $\Cat$ and related formalization projects.  In \S\ref{sec:future}, we discuss plans for future work that will expand and apply these results.

\section{The formalization}\label{sec:formalization}

The formalized proof of our first main result, Theorem \ref{thm:nerveAdj}, which defines the left adjoint to the nerve functor, takes one line:
\begin{lstlisting}
def !\cdoc{nerveAdjunction}! : !\ldoc[SSet.]{hoFunctor}! ⊣ !\cdoc{nerveFunctor}! :=
  !\cdoc{Adjunction.ofNatIsoRight}! ((!\ldoc{SSet.coskAdj}! 2).comp !\cdocTwo{nerve}{Adj}!) !\cdocTwo{Nerve.cosk}{Iso}!.symm
\end{lstlisting}
Unpacking, this tells us that up to a natural isomorphism \cdocTwo{Nerve.cosk}{Iso} between right adjoints abbreviated as ``$\upsilon$'' in the diagram below, the nerve adjunction is defined as a composite of two other adjunctions:
\[ \begin{tikzcd} \Cat \arrow[r, bend right=20, dashed, "\nerve_2"' pos=.55] \arrow[rr, bend right=47, "\nerve"', "{\cong\Uparrow\upsilon}"]\arrow[r, phantom, "\bot"] & \Set^{\DDelta_{\leq 2}} \arrow[l, bend right=20, dashed,  "\ho_2"' pos=.4] \arrow[r, bend right=20, "\cosk_2"' pos=.45] \arrow[r, phantom, "\bot"] & \Set^{\DDelta^\op} \arrow[l, bend right=20, "\tr_2"'] \arrow[ll, bend right=47, "\ho"', "{\rotatebox{270}{$\coloneqq$}}"] \end{tikzcd}\]
In particular, the homotopy category functor is defined as the composite of the two left adjoints:
\begin{lstlisting}
/-- The functor that takes a simplicial set to its homotopy category by passing through the 2-truncation. -/
def !\ldoc[SSet.]{hoFunctor}! : !\ldoc{SSet}! ⥤ !\cdoc{Cat}! := !\ldoc{SSet.truncation}! 2 ⋙ !\ldocTwo[SSet.]{Truncated.hoFunctor}{}
\end{lstlisting}

One of these adjunctions, \texttt{\ldoc{SSet.coskAdj} 2}, had already been formalized, so our task was to define the other adjunction \cdocTwo{nerve}{Adj} and construct the isomorphism \cdocTwo{Nerve.cosk}{Iso}. We first describe the construction of the natural isomorphism \cdocTwo{Nerve.cosk}{Iso}, which exhibits an important property of the nerve functor called ``2-coskeletality.'' Then, we  describe our formalization of the adjunction \cdocTwo{nerve}{Adj} and our proof that its right adjoint is fully faithful, which implies that the nerve functor is fully faithful as well.

\subsection{The 2-coskeletality of the nerve}

The fact that the ``homotopy category $\dashv$ nerve'' adjunction can be constructed as a composite of two other adjunctions follows from the fact that the nerve functor factors (up to isomorphism) as a composite of two right adjoints, through a category that we now introduce. Recall that the category of simplicial sets is defined as a presheaf category $\Set^{\DDelta^\op}\!\!$. Given $n : \NN$, we define the $n$-\emph{truncated simplex category} $\DDelta_{\leq n}$ as the full subcategory $\DDelta_{\leq n} \subset \DDelta$ spanned by the objects $[0], \ldots, [n]$. The category of $n$-\emph{truncated simplicial sets}, $\Set^{\DDelta_{\leq n}^\op}$, is then the category of presheaves over $\DDelta_{\leq n}$. The restriction functor, known as $n$-\emph{truncation}, has fully faithful left and right adjoints defined by left and right Kan extension:\footnote{The right and left Kan extensions exist because $\DDelta$ is small and $\Set$ is complete and cocomplete.}
\[ \begin{tikzcd} \Set^{\DDelta_{\leq n}^\op}\arrow[r, bend left, hook', "\sk_n", "\bot"']\arrow[from=r, "\tr_n" description]  \arrow[r, bend right, hook, "\cosk_n"', "\bot"]& \Set^{\DDelta^\op}  \end{tikzcd}\]
\begin{lstlisting}
/-- The adjunction between the n-skeleton and n-truncation. -/
def !\cdoc[SimplicialObject.]{skAdj}! : !\cdoc[SimplicialObject.]{Truncated.sk}! (C := C) n ⊣ !\cdoc[SimplicialObject.]{truncation}! n := !\cdoc[Functor.]{lanAdjunction}! _ _

/-- The adjunction between n-truncation and the n-coskeleton. -/
def !\cdoc[SimplicialObject.]{coskAdj}! : !\cdoc[SimplicialObject.]{truncation}! (C := C) n ⊣ !\cdoc[SimplicialObject.]{Truncated.cosk}! n := !\cdoc[Functor.]{ranAdjunction}! _ _
\end{lstlisting}

For any functor valued in simplicial sets, we can obtain its $n$-truncated version by postcomposing with the functor $\tr_n \colon \Set^{\DDelta^\op} \to \Set^{\DDelta_{\leq n}^\op}$. In particular, the 2-truncated nerve $\nerve_2$ is defined to be the composite functor:
\[ \begin{tikzcd} \nerve_2 \colon \Cat \arrow[r, "\nerve"] & \Set^{\DDelta^\op\!\!} \arrow[r, "\tr_2"] & \Set^{\DDelta_{\leq 2}^\op}\,. \end{tikzcd}\]
\begin{lstlisting}
/-- The essential data of the nerve functor is contained in the 2-truncation, which is recorded by the composite functor `nerveFunctor₂'. -/
def !\cdocTwo[Nerve.]{nerveFunctor}{}! : !\cdoc{Cat}! ⥤ !\ldoc{SSet.Truncated}! 2 := !\cdoc{nerveFunctor}! ⋙ !\ldoc[SSet.]{truncation}! 2
\end{lstlisting}

The canonical natural transformation $\upsilon \colon \nerve \Rightarrow \cosk_2\,\nerve_2$ is defined by left whiskering the unit $\eta \colon \id \Rightarrow \cosk_2\, \tr_2$  of the adjunction $\tr_2 \dashv \cosk_2$ with the nerve functor:
\[ \begin{tikzcd}[row sep=tiny] & \Set^{\DDelta_{\leq 2}^\op} \arrow[dr, "\cosk_2"] && [-1.5em] & [-1.5em] & & \Set^{\DDelta_{\leq 2}^\op} \arrow[dr, "\cosk_2"] \\ \Cat \arrow[rr, bend right=15, "\nerve"']  \arrow[ur, "\nerve_2"] \arrow[rr, phantom, "\Uparrow\upsilon"] & & \Set^{\DDelta^\op} &  \coloneqq & \Cat \arrow[r, "\nerve"'] \arrow[urr, bend left=15, "\nerve_2", "{\rotatebox{300}{$\coloneqq$}}"'] & \Set^{\DDelta^\op} \arrow[ur, "\tr_2"] \arrow[rr, bend right=15, equals] \arrow[rr, phantom, "\Uparrow\eta"] & &  \Set^{\DDelta^\op}
\end{tikzcd} \]
The natural transformation $\upsilon$ is the underlying map of the isomorphism \cdocTwo{Nerve.cosk}{Iso}.
\begin{lstlisting}
/-- The natural isomorphism between `nerveFunctor' and `nerveFunctor₂ ⋙ Truncated.cosk 2' whose components `nerve C ≅ (Truncated.cosk 2).obj (nerveFunctor₂.obj C)' show that nerves of categories are 2-coskeletal. -/
def !\cdocTwo[Nerve.]{cosk}{Iso}! : !\cdoc{nerveFunctor}! ≅ !\cdocTwo[Nerve.]{nerveFunctor}{}! ⋙ !\ldoc[SSet.]{Truncated.cosk}! 2 :=
  !\cdoc{NatIso.ofComponents}! (fun C ↦ (!\cdoc{nerve}! C).!\cdoc[SimplicialObject.]{isoCoskOfIsCoskeletal}! 2)
    (fun _ ↦ (!\ldoc[SSet.]{coskAdj}! 2).unit.naturality _)

/-- The canonical isomorphism `X ≅ (cosk n).obj X' defined when X is coskeletal and the n-coskeleton functor exists. -/
def !\cdoc[SimplicialObject.]{isoCoskOfIsCoskeletal}! [X.!\cdoc[SimplicialObject.]{IsCoskeletal}! n] : X ≅ (!\cdoc[SimplicialObject.]{cosk}! n).obj X :=
  asIso ((!\cdoc[SimplicialObject.]{coskAdj}! n).unit.app X)
\end{lstlisting}

A simplicial set $X$ is \emph{2-coskeletal} just when the unit component $\eta_X \colon X \to \cosk_2\,\tr_2\,X$ is an isomorphism. Thus, our first main subproblem, to prove that the natural transformation $\upsilon$ is a natural isomorphism, is achieved by the following proposition:

\begin{proposition}[{\ldoc[SSet.StrictSegal.]{isCoskeletal}}]\label{prop:2-coskeletal}
Nerves of categories are 2-coskeletal.
\end{proposition}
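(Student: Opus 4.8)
The strategy is to isolate the combinatorial heart of the statement in the \emph{strict Segal condition} and then prove, as a separate general lemma, that every strict Segal simplicial set is $2$-coskeletal. Recall that $X \in \Set^{\DDelta^\op}$ is \emph{strict Segal} if for every $n$ the \emph{spine map}
\[
  X_n \longrightarrow X_1 \times_{X_0} X_1 \times_{X_0} \cdots \times_{X_0} X_1,
\]
restricting an $n$-simplex to the string of its $n$ consecutive edges, is a bijection. The first step is to check that $\nerve C$ is strict Segal, which is essentially a restatement of the definition of a category: since $\nerve\ C_n$ is the set of functors $\Fin(n{+}1)\to C$, such a functor is exactly a choice of objects $c_0,\dots,c_n$ together with an arrow $c_{i-1} \to c_i$ for each $i$, all remaining arrows and all coherence being forced by composition, associativity, and unitality, so the inverse to the spine map is ``compose.'' I expect this to be a short unfolding of definitions, possibly already in the library.

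The second step is the general lemma: every strict Segal $X$ is $2$-coskeletal, i.e.\ the unit $\eta_X \colon X \to \cosk_2\tr_2 X$ is invertible. I would first record that $\eta_X$ is automatically invertible in degrees $\leq 2$: applying $\tr_2$ and using that the counit $\tr_2\cosk_2 \Rightarrow \id$ is an isomorphism (right Kan extension along the fully faithful $\DDelta_{\leq 2}\hookrightarrow\DDelta$), a triangle identity forces $\tr_2\eta_X$ to be an isomorphism. So it remains to show each $\eta_{X,n}$ is a bijection for $n \geq 3$. For this I would use the identification $(\cosk_2\tr_2 X)_n \cong \Hom_{\sSet}(\sk_2\Delta^n, X)$ (by the $\sk_2 \dashv \tr_2 \dashv \cosk_2$ adjunctions, together with the fact that $\sk_2\Delta^n$ is built from its simplices of dimension $\leq 2$), under which $\eta_{X,n}$ is the restriction map sending an $n$-simplex to the compatible family consisting of all of its vertices, edges, and triangular faces; concretely such a family is a tuple of $0$-, $1$-, and $2$-simplices of $X$ indexed by the vertices, edges, and triples of $[n]$, subject to the evident face relations.

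Injectivity of $\eta_{X,n}$ is immediate: two $n$-simplices with the same family of faces have in particular the same spine, hence coincide by the strict Segal condition at level $n$. For surjectivity, given such a family $f$, one extracts its spine --- a composable string of edges --- and lets $\sigma \in X_n$ be the unique $n$-simplex it determines; it then remains to verify that $\sigma$ reproduces $f$. Agreement on vertices and on the $n$ consecutive edges is built in. Agreement on an arbitrary edge $\{i,j\}$ follows by induction on $j-i$: the triangular face of $f$ on $\{i, i{+}1, j\}$ and the corresponding face of $\sigma$ have the same two short edges (the $\{i,i{+}1\}$-edge by the consecutive-edge case, the $\{i{+}1,j\}$-edge by the inductive hypothesis), hence are equal by the strict Segal condition at level $2$, so their long edges --- the $\{i,j\}$-edges of $f$ and of $\sigma$ --- agree. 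With all edges matched, agreement on an arbitrary triangle $\{a,b,c\}$ follows in the same way, comparing short edges and applying level-$2$ strict Segal once more. Combining the two steps yields the proposition, and hence that the natural transformation $\upsilon$ preceding it is a natural isomorphism.

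The step I expect to be the main obstacle is the general lemma, and within it the passage between the two descriptions of an $n$-simplex: the ``path of composable arrows'' picture supplied by the strict Segal condition and the ``compatible family of low-dimensional faces'' picture supplied by the matching object $(\cosk_2\tr_2 X)_n$. Pinning down the limit computing $\cosk_2$ concretely enough to run the finite induction above, and keeping track of exactly which faces and which face-compatibilities are in play, is where the formalization effort concentrates; the strict Segal input for the nerve, by contrast, is essentially free.
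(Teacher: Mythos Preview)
Your proposal is correct and matches the paper's approach: both factor through the strict Segal condition, first observing that nerves are strict Segal and then proving as a general lemma that strict Segal simplicial sets are $2$-coskeletal. Your explicit induction (first on edges by gap length, then on triangles) is a faithful mathematical unpacking of what the paper leaves implicit in its \texttt{lift}/\texttt{fac}/\texttt{uniq} verification.

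One small packaging difference: the paper works directly with the \emph{pointwise right Kan extension} characterization of $2$-coskeletality, verifying that the canonical cone with summit $X_n$ over the diagram indexed by maps $[j]\to[n]$ with $j\le 2$ is a limit cone, whereas you pass through the identification $(\cosk_2\tr_2 X)_n \cong \Hom(\sk_2\Delta^n,X)$ obtained from the $\sk_2\dashv\tr_2$ adjunction. These two descriptions of the matching object are equivalent, and your ``compatible family of vertices, edges, and triangles'' is exactly what a cone over the paper's indexing diagram amounts to; but the paper's route avoids invoking $\sk_2$ and its left adjunction at all, which in formalization means one fewer layer of identification to manage. Either way the content is the same, and your anticipation that the bookkeeping between the ``spine'' and ``low-dimensional faces'' pictures is where the work concentrates is exactly right.
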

\begin{lstlisting}
!\href{https://github.com/leanprover-community/mathlib4/blob/bdf8f646f/Mathlib/AlgebraicTopology/SimplicialSet/Coskeletal.lean#L235-L236}{\color{keywordcolor}instance}! (C : Type u) [Category C] : (!\cdoc{nerve}! C).!\cdoc[SimplicialObject.]{IsCoskeletal}! 2 := ...
\end{lstlisting}

We unpack the 2-coskeletality condition and describe the proof of Proposition \ref{prop:2-coskeletal} in \S\ref{sec:strict-segal}.

\subsection{The 2-truncated nerve adjunction}\label{sec:ho2}

On account of the natural isomorphism $\upsilon \colon \nerve \cong \cosk_2\,\nerve_2$ provided by Proposition \ref{prop:2-coskeletal}, it now suffices to construct a left adjoint $\ho_2 \colon \Set^{\DDelta_{\leq 2}^\op} \to \Cat$ to the functor $\nerve_2$. To do so, we unpack the data contained in a (2-truncated) simplicial set.

On paper there is a well-known presentation of the morphisms in the \ldoc{SimplexCategory} $\DDelta$ by generators and relations \cite[\S I.1]{goerss-jardine} that is in the process of being formalized by Robin Carlier. At present, \libmathlib{} contains the generating \emph{face maps} $\delta^i \colon [n] \to [n + 1]$, corresponding to monomorphisms whose image omits $i \in \Fin(n + 2)$, as well as the generating \emph{degeneracy maps} $\sigma^i \colon [n + 1] \to [n]$, corresponding to epimorphisms for which $i \in \Fin(n+1)$ has two elements in its preimage; \libmathlib{} also knows the composition relations these maps satisfy, but does not contain a proof that $\DDelta$ is equivalent to the category \ldoc{SimplexCategoryGenRel} with this presentation.

A 2-truncated simplicial set $X$ is given by the data of three sets $X_0, X_1, X_2$ and maps between them indexed by the morphisms between the three objects $[0], [1], [2] : \DDelta_{\leq 2} \subset \DDelta$. The further 1-truncation of $X$  is given by the data below-left defined by the contravariant actions of the morphisms in $\DDelta_{\leq 1}$ displayed below-right:
\begin{equation}\label{eq:underlying-refl-quiv} \begin{tikzcd} X_0 \arrow[r, "\sigma_0" description] & X_1 \arrow[l, shift left=.6em, "\delta_1"] \arrow[l, shift right=.6em, "\delta_0"']  & & {[0]} \arrow[r, shift left=.6em, "\delta^0"] \arrow[r, shift right=.6em, "\delta^1"'] & {[1]} \arrow[l, "\sigma^0" description] \end{tikzcd}
\end{equation}
The composition relations    $\sigma^0 \circ \delta^0 = \id = \sigma^0 \circ \delta^1$ in $\DDelta$ induce dual relations $\delta_0 \circ \sigma_0 = \id = \delta_1 \circ \sigma_0$. As explained in Example \ref{ex:underlying-refl-quiv}, this is closely related to categorical data known as a \emph{reflexive quiver}.

A \emph{quiver} is defined in \libmathlib{} as a type equipped with a dependent type of arrows:
\begin{lstlisting}
class !\ldoc{Quiver}! (V : Type) where
  /-- The type of edges/arrows/morphisms between a given source and target. -/
  Hom : V → V → Type
\end{lstlisting}
Kim Morrison formalized the ``free $\dashv$ forgetful'' adjunction between the category $\Quiv$, of quivers and structure-preserving morphisms, called \emph{prefunctors}, between them, and $\Cat$.

\begin{proposition}[\cdoc{Quiv.adj}]\label{prop:quiv-adj}
  The free category on a quiver is left adjoint to the forgetful functor:
  $ \begin{tikzcd} \Cat \arrow[r, bend right=20, "U"' pos=.6] \arrow[r, phantom, "\bot"] & \Quiv\,. \arrow[l, bend right=20, "F"' pos=.4] \end{tikzcd} $
\end{proposition}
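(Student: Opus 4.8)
The plan is to construct the adjunction by exhibiting the left adjoint $F$ explicitly and verifying its universal property in the form of a natural hom-set bijection. On objects, $F$ sends a quiver $V$ to its \emph{path category}: the objects are the vertices of $V$, and a morphism from $a$ to $b$ is a finite (possibly empty) list of composable edges $a = v_0 \to v_1 \to \cdots \to v_k = b$; composition is concatenation of paths, and the identity on $a$ is the empty path. A prefunctor $V \to W$ acts edgewise, carrying a path to the path of the images of its edges, and this assignment visibly respects concatenation and empty paths, so we obtain a functor $F \colon \Quiv \to \Cat$. The forgetful functor $U \colon \Cat \to \Quiv$ discards composition and identities, remembering only the objects and the arrows. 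The unit $\eta_V \colon V \to UFV$ is the prefunctor that is the identity on vertices and sends each edge $e$ to the length-one path $(e)$.

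The crux is the universal property: for every category $C$, precomposition with $\eta_V$ should be a bijection $\Cat(FV, C) \cong \Quiv(V, UC)$. Given a prefunctor $\phi \colon V \to UC$, define $\bar\phi \colon FV \to C$ to agree with $\phi$ on objects and to send a path $(e_1, \ldots, e_k)$ to the composite $\phi(e_k) \circ \cdots \circ \phi(e_1)$, with the empty path sent to the identity. Preservation of identities holds by definition, and preservation of composition is an induction on the structure of one of the two paths using associativity in $C$; hence $\bar\phi$ is a functor, and $U\bar\phi \circ \eta_V = \phi$ holds by construction. For uniqueness, any functor $G$ with $UG \circ \eta_V = \phi$ must send each length-one path $(e)$ to $\phi(e)$, and since every path is the composite of its length-one sub-paths, an induction on the path forces $G = \bar\phi$. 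The counit $\epsilon_C \colon FUC \to C$, which sends a path of arrows in $C$ to their composite, is the instance $\overline{\id_{UC}}$ of this construction. Naturality of the bijection in $V$ and in $C$ is a routine diagram chase, and this naturality is exactly the data promoting the hom-set bijection to the adjunction $F \dashv U$ (equivalently, one checks the two triangle identities directly).

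I do not expect a genuinely hard step here — this is a textbook construction, and in \libmathlib{} the ingredients already exist as the inductive type of quiver paths together with the ``lift'' of a prefunctor over the path category. The one place that wants care, especially in a formal development, is making the two inductions on paths go through cleanly: one must set up the recursion on the inductive path type so that the step cases, which depend on the definitional behaviour of concatenation, typecheck, and one must stay consistent about the order in which edges compose along a path. Everything else reduces to unfolding definitions.
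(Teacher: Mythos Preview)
Your argument is correct and is the standard construction of the path-category left adjoint via the hom-set bijection. There is nothing to compare against in the paper itself: Proposition~\ref{prop:quiv-adj} is stated without proof, attributed to an existing \libmathlib{} formalization by Kim Morrison, and is used only as an input to the subsequent results. For what it is worth, the paper's proof of the companion result Proposition~\ref{prop:free-refl-quiv-adj} (the reflexive-quiver analogue) proceeds via unit, counit, and the triangle identities rather than the hom-set bijection you emphasize; your sketch acknowledges this alternative at the end, and the two packagings are of course equivalent. Your remarks about the inductions on paths and the care needed with the order of composition are apt and match the shape of the underlying \libmathlib{} development.
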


A similar result also holds for \emph{reflexive quivers}, which extend quivers with specified ``identity'' endo-arrows for each term in the base type:
\begin{lstlisting}
/-- A reflexive quiver extends a quiver with a specified arrow `id X : X ⟶ X' for each `X' in its type of objects. We denote these arrows by `id' since categories can be understood as an extension of reflexive quivers. -/
class !\cdoc{ReflQuiver}! (obj : Type) extends !\ldoc{Quiver}! obj where
  /-- The identity morphism on an object. -/
  id : ∀ X : obj, Hom X X
\end{lstlisting}

We introduced reflexive quivers and \emph{reflexive prefunctors} and defined the evident forgetful functors $U^r \colon \Cat \to \rQuiv$ from the category of categories to the category of reflexive quivers and $U^q \colon \rQuiv \to \Quiv$ from the category of reflexive quivers to the category of quivers. Definitionally, we have $U = U^q\,U^r \colon \Cat \to \Quiv$.


\begin{definition}[\cdoc{Cat.FreeRefl}] The \emph{free category} on a reflexive quiver $Q$ is the quotient of the free category $F\,U^q\,Q$ on the underlying quiver of $Q$ by the hom relation that identifies the specified reflexivity arrows in $Q$ with the identity arrows in the category $F\,U^q\,Q$.
\end{definition}

\begin{lstlisting}
/-- A reflexive quiver generates a free category, defined as as quotient of the free category on its underlying quiver (called the "path category") by the hom relation that uses the specified reflexivity arrows as the identity arrows. -/
def !\cdoc[Cat.]{FreeRefl}! (V) [!\cdoc{ReflQuiver}! V] :=
  Quotient (C := !\cdoc{Cat.free}!.obj (!\cdoc{Quiv.of}! V)) (!\cdoc[Cat.]{FreeReflRel}! (V := V))

instance (V) [!\cdoc{ReflQuiver}! V] : Category (!\cdoc[Cat.]{FreeRefl}! V) :=
  inferInstanceAs (Category (Quotient _))
\end{lstlisting}

This defines a functor $F^r \colon \rQuiv \to \Cat$ and a natural transformation $q \colon F\,U^q \Rightarrow F^r$ whose components $q_Q \colon F\,U^q\,Q \to F^r\,Q$ are universal among functors with domain $F\,U^q\,Q$ that respect the hom relation, sending the specified reflexivity arrows of $Q$ to identities.

We then formalized the following proof:
\begin{proposition}[\cdoc{ReflQuiv.adj}]\label{prop:free-refl-quiv-adj}
  The free category on a reflexive quiver is left adjoint to the forgetful functor:
  $ \begin{tikzcd} \Cat \arrow[r, bend right=20, "U^r"' pos=.6] \arrow[r, phantom, "\bot"] & \rQuiv\,. \arrow[l, bend right=20, "F^r"' pos=.4] \end{tikzcd} $
\end{proposition}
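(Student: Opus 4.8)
The plan is to obtain the adjunction $F^r \dashv U^r$ by transporting the quiver adjunction $F \dashv U$ of Proposition~\ref{prop:quiv-adj} across the quotient $q$. Rather than building a unit and counit from scratch, I would directly exhibit a bijection, natural in the reflexive quiver $Q$ and the category $C$,
\[ \Cat(F^r Q,\, C) \;\cong\; \rQuiv(Q,\, U^r C), \]
and then assemble the adjunction from it with the mathlib constructor \texttt{Adjunction.mkOfHomEquiv}.

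I would build this bijection as a composite of three identifications. First, the stated universal property of $q_Q \colon F\,U^q\,Q \to F^r Q$ says precisely that precomposition with $q_Q$ identifies $\Cat(F^r Q, C)$ with the set of functors $G \colon F\,U^q\,Q \to C$ sending each reflexivity arrow of $Q$ to an identity of $C$. Second, the hom-set bijection of $F \dashv U$ identifies $\Cat(F\,U^q\,Q, C)$ with $\Quiv(U^q\,Q,\, U^q\,U^r C)$, using $U = U^q\,U^r$. Third --- the one genuinely content-bearing step --- I would check that under this second bijection, $G$ kills the reflexivity arrows of $Q$ exactly when the transposed prefunctor $U^q\,Q \to U\,C$ carries each reflexivity arrow $\id\,X$ to the identity $\id_X$ of $C$: the transpose of $G$ is $U(G)$ precomposed with the unit of $F \dashv U$, which sends an edge to its generating morphism, and that generating morphism is exactly what the relation \texttt{FreeReflRel} identifies with an identity. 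But a prefunctor $U^q\,Q \to U\,C$ preserving reflexivity arrows is precisely the underlying prefunctor of a reflexive prefunctor $Q \to U^r C$, i.e.\ an element of $\rQuiv(Q, U^r C)$. Composing the three yields the bijection.

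Next I would verify naturality. Naturality in $C$ is immediate, since postcomposition commutes with precomposition by $q_Q$, with the $F \dashv U$ hom-equivalence, and with $U^r$. Naturality in $Q$ reduces to the naturality of $q \colon F\,U^q \Rightarrow F^r$ and of the $F \dashv U$ hom-equivalence, once one knows that two functors out of $F^r Q$ agreeing after precomposition with $q_Q$ are equal --- which holds because $q_Q$ is full and surjective on objects. (Equivalently, one could run the argument via unit and counit: the counit at $C$ is the factorization through $q_{U^r C}$ of the counit $F\,U\,C \to C$ of $F \dashv U$, which kills reflexivity arrows by construction; the unit at $Q$ is the composite $Q \to U^r F\,U^q\,Q \xrightarrow{U^r q_Q} U^r F^r Q$ of the $F \dashv U$ unit with $U^r q_Q$; and both triangle identities follow from those of $F \dashv U$ after precomposing with $q_Q$ and invoking naturality of $q$.)

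I expect the main obstacle to be administrative rather than conceptual. The proof repeatedly needs that concrete functors respect the relation \texttt{FreeReflRel} and hence descend along $q$, it must wrangle the \texttt{Quotient.lift} and quotient-functor API, and it must keep the two layers of forgetful functor ($U^r$ atop $U^q$) synchronized with the definitional equation $U = U^q\,U^r$ and with the distinction between a reflexive prefunctor and its underlying prefunctor. None of these steps is deep, but together they are exactly the kind of bookkeeping that makes such a formalization many times longer than the idea behind it.
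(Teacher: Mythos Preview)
Your proposal is correct, but the paper takes precisely the route you relegate to a parenthetical: it constructs the adjunction from unit and counit and verifies the triangle identities directly. The counit $\epsilon^r_C$ is the factorization of $\epsilon_C$ through $q_{U^r C}$, and the unit $\eta^r_Q$ is obtained by first forming the prefunctor $U q_Q \circ \eta_{U^q Q} \colon U^q Q \to U F^r Q$ and then observing it preserves reflexivity arrows, so that it lifts to a reflexive prefunctor $Q \to U^r F^r Q$. (Your parenthetical writes the first leg as a map $Q \to U^r F\,U^q Q$ in $\rQuiv$, but the unit of $F \dashv U$ sends a reflexivity edge to the length-one path on it, not to the identity; only after composing with $U q_Q$ does the map become reflexive, so the lift happens one step later than you indicate.) The two triangle identities are then discharged by faithfulness of $U^q$ and by precomposing with $q_Q$, respectively, each reducing to a triangle identity for $F \dashv U$. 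Your hom-equivalence route trades those two chases for the naturality-in-$Q$ verification, and either is mathematically fine; the paper notes that in their setup naturality of $\eta^r$ holds by \texttt{rfl}, and more importantly the unit/counit template is reused verbatim for the harder adjunction $\ho_2 \dashv \nerve_2$ of Proposition~\ref{prop:2-truncated-nerve-adj}, where the unit is lifted through Proposition~\ref{prop:to-nerve2-mk} in place of the reflexive-prefunctor lift used here.
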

\begin{proof}
We construct the adjunction by defining the unit and counit and verifying the triangle identities. The component of the counit $\epsilon^r_C$ at a category $C$ is defined by appealing to the universal property of the quotient functor $q_{U^rC}$ in the following diagram:
\[ \begin{tikzcd} F\,U^q\, U^r\, C \arrow[r, "q_{U^rC}"] \arrow[d, equals] & F^r\,U^r\, C \arrow[d, dashed, "\epsilon^r_C"] \\ F\,U\,C \arrow[r, "\epsilon_C"'] & C \end{tikzcd}\]
Modulo the definitional equality $U^q\,U^r=U$, the counit component $\epsilon_C$ of the adjunction $F \dashv U$ sends the specified reflexivity arrows of $U^r\,C$, the identities of $C$, to identities. Thus this functor factors through the quotient functor $q_{U^rC}$ to define the counit component $\epsilon^r_C$. Naturality of $\epsilon$ follows from the uniqueness of the universal property of the quotient functor.

The construction of the unit components at a reflexive quiver $Q$ is more subtle. In the category $\Quiv$ of quivers and prefunctors we have the following composite: \[ \begin{tikzcd} U^q\,Q \arrow[r, "\eta_{U^q\,Q}"] & U\,F\,U^q\,Q \arrow[r, "U\,q_Q"] & U\,F^r\,Q  = U^q\,U^rF^r\,Q, \end{tikzcd}\] defining a prefunctor between the underlying quivers of the reflexive quivers $Q$ and $U^rF^rQ$. In fact, this map preserves the specified reflexivity arrows and hence lifts to a reflexive prefunctor $\eta^r_Q \colon Q \to U^r F^r Q$ with defining equality:
\[ \begin{tikzcd} U^q\, Q \arrow[d, "\eta_{U^q\,Q}"'] \arrow[r, "U^q\,\eta^r_Q", dashed] & U^q\,U^rF^r\,Q \arrow[d, equals] \\ U\,F\,U^q\,Q\arrow[r, "U\,q_Q"'] & U\,F^r\,Q\, . \end{tikzcd}\]
Naturality in this case actually holds by reflexivity.

The proofs of the triangle equalities require diagram chases. To verify commutativity of
\[ \begin{tikzcd} U^r\, C \arrow[r, "\eta^r_{U^r\,C}"] \arrow[rr, bend right=15, equals] & U^r\,F^r\,U^r\,C \arrow[r, "U^r\,\epsilon^r_C"] & U^r\,C, \end{tikzcd}\] we use the fact that the functor $U^q \colon \rQuiv \to \Quiv$ is faithful and the fact that $U^q$ of this composite is equal to the corresponding triangle identity composite for the adjunction $F \dashv U$. To verify commutativity of
\[ \begin{tikzcd} F^r\, Q \arrow[r, "F^r \eta^r_Q"] \arrow[rr, bend right=15, equals] & F^r\,U^r\,F^r\,Q \arrow[r, "\epsilon^r_{F^rQ}"] & F^r\, Q, \end{tikzcd}\] we use the universal property of the quotient functor to instead verify commutativity of
\[ \begin{tikzcd} F\,U^q\, Q \arrow[r, "q_Q"] \arrow[rrr, bend right=10, "q_Q"'] & F^r\, Q \arrow[r, "F^r \eta^r_Q"] & F^r\,U^r\,F^r\,Q \arrow[r, "\epsilon^r_{F^rQ}"] & F^r\, Q, \end{tikzcd}\] which follows from naturality of $q$, the defining equalities for $\epsilon^r$ and $\eta^r$, naturality of $\epsilon$, and the corresponding triangle equality for $F \dashv U$. These steps occupy 7 lines of a 23 line proof due to the challenges described in \S\ref{sec:expected-challenges}.
\end{proof}


\begin{example}[{\ldocTwo[SSet.]{instReflQuiverOneTruncation}{}}]\label{ex:underlying-refl-quiv}
  Returning to \eqref{eq:underlying-refl-quiv}, a 2-truncated simplicial set $X$ has an underlying reflexive quiver $U^2_1X$ whose type of objects is $X_0$ and whose type of arrows from $x : X_0$ to $y : X_0$ is the type of 1-simplices $f : X_1$ so that $\delta_1 f = x$ and $\delta_0 f = y$.\footnote{To explain the orientation, note that $\delta^1 \colon [0] \to [1]$ picks out the source object of $\Fin(2)$ while $\delta^0 \colon [0] \to [1]$ picks out the target object.}
\end{example}

We can now define the 2-truncated homotopy category functor.

\begin{definition}[{\ldoc[SSet.]{Truncated.HomotopyCategory}}]\label{defn:homotopy-category}
The homotopy category $\ho_2\,X$ is the quotient of the free category $F^r\, U^2_1\,X$ on the underlying reflexive quiver of a 2-truncated simplicial set $X$ by a hom relation generated by the 2-simplices: for every 2-simplex $\sigma : X_2$, we identify the face $\delta_1\, \sigma$ with the composite $\delta_0\,\sigma \cdot \delta_2\, \sigma$ in the category $F^r\,U^2_1\,X$.
\end{definition}
\begin{lstlisting}
/-- The 2-simplices in a 2-truncated simplicial set `V' generate a hom relation on the free category on the underlying reflexive quiver of `V'. -/
inductive !\ldocTwo[SSet.Truncated.]{HoRel}{}! {V : !\ldoc{SSet.Truncated}! 2} :
    (X Y : !\cdoc{Cat.FreeRefl}! (!\ldocTwo[SSet.]{OneTruncation}{}! V)) → (f g : X ⟶ Y) → Prop
  | mk (φ : V _⦋2⦌₂) : !\ldocTwo[SSet.Truncated.]{HoRel}{}! _ _
                        (Quot.mk _ (!\ldoc{Quiver.Hom.toPath}! (!\ldocTwo[SSet.Truncated.]{ev02}{}! φ)))
                        (Quot.mk _ ((!\ldoc{Quiver.Hom.toPath}! (!\ldocTwo[SSet.Truncated.]{ev01}{}! φ)).comp
                          (!\ldoc{Quiver.Hom.toPath}! (!\ldocTwo[SSet.Truncated.]{ev12}{}! φ))))

/-- The type underlying the homotopy category of a 2-truncated simplicial set V. -/
def !\ldoc{SSet.Truncated.HomotopyCategory}! (V : !\ldoc{SSet.Truncated}! 2) : Type u :=
  Quotient (!\ldocTwo[SSet.Truncated.]{HoRel}{}! (V := V))

instance (V : !\ldoc{SSet.Truncated}! 2) : Category.{u} (V.!\ldoc[SSet.Truncated.]{HomotopyCategory}!) :=
  inferInstanceAs (Category (!\ldoc{CategoryTheory.Quotient}! ..))
\end{lstlisting}

This defines a functor $\ho_2 \colon \Set^{\DDelta_{\leq 2}^\op} \to \Cat$ and a natural transformation $q \colon F^r\, U^2_1 \Rightarrow \ho_2$ whose components $q_X \colon F^r\, U^2_1\, X \to \ho_2\,X$ are universal among functors with domain $F^r\, U^2_1\, X$ that respect the hom relation, sending the image of the composite $\delta_0\, \sigma \cdot \delta_2\, \sigma$ of the arrows along the ``spine'' of a 2-simplex $\sigma : X_2$ to the image of the ``diagonal'' arrow $\delta_1\, \sigma$.

The proof that $\ho_2$ is left adjoint to $\nerve_2$ proceeds similarly to the proof of Proposition~\ref{prop:free-refl-quiv-adj}, with one step made considerably more difficult. As before, the counit of $\ho_2 \dashv \nerve_2$ is defined using the universal property of the quotient functor $q$ while the unit is defined by lifting a map from the category of reflexive quivers to the category of 2-truncated simplicial sets. This lifting is enabled by the following proposition:

\begin{proposition}[\cdocTwo{toNerve}{.mk}, \cdocTwo{toNerve}{.ext}]\label{prop:to-nerve2-mk}
Let $X : \Set^{\DDelta_{\leq 2}^\op}$ and $C : \Cat$.

\begin{romanenumerate}
\item Consider a reflexive prefunctor $F : U^2_1\, X \to U^2_1\, \nerve_2\, C \cong U^r\, C$ from the underlying reflexive quiver of $X$ to the underlying reflexive quiver of $C$. If for each 2-simplex $\sigma : X_2$, $F(\delta_1\, \sigma)$ equals the composite $F(\delta_0\, \sigma) \cdot F(\delta_2\, \sigma)$ in $C$, then $F$ lifts to a map of 2-truncated simplicial sets.
\item Any parallel pair of maps  of 2-truncated simplicial sets $F,G \colon X \to \nerve_2\, C$ are equal if their underlying reflexive prefunctors agree.
\end{romanenumerate}
\end{proposition}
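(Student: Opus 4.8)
The plan is to exploit the explicit description of the low-dimensional simplices of a nerve. Here $(\nerve C)_0$ is the set of objects of $C$ and $(\nerve C)_1$ is the set of arrows; the crucial point, which is exactly the strict Segal property underlying Proposition~\ref{prop:2-coskeletal}, is that $(\nerve C)_2$ is naturally identified with the set of composable pairs of arrows in $C$, with the face operators $\delta_2,\delta_0 \colon (\nerve C)_2 \to (\nerve C)_1$ returning the two arrows of a composable pair, the face operator $\delta_1$ returning their composite, and the degeneracy operators $\sigma_0,\sigma_1 \colon (\nerve C)_1 \to (\nerve C)_2$ inserting an identity arrow on the appropriate side. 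Both parts of the proposition follow by transporting data and equations across this identification and the given isomorphism $U^2_1\,\nerve_2\,C \cong U^r C$.

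For part (i), the reflexive prefunctor $F$ already supplies the components of the desired map $X \to \nerve_2\,C$ at $[0]$ and $[1]$ — the object and arrow functions — compatibly with the morphisms of $\DDelta_{\leq 1}$; this is precisely the content of Example~\ref{ex:underlying-refl-quiv} together with the identification $U^2_1\,\nerve_2\,C \cong U^r C$. It remains to define the component at $[2]$: to a $2$-simplex $\sigma : X_2$ we assign the composable pair $\bigl(F(\delta_2\,\sigma),\,F(\delta_0\,\sigma)\bigr)$ in $C$, which is composable because the simplicial identity $\delta_0\delta_2 = \delta_1\delta_0$ forces the target of $F(\delta_2\,\sigma)$ to equal the source of $F(\delta_0\,\sigma)$. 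One then checks naturality with respect to the generating simplicial operators among $[0],[1],[2]$: compatibility with $\delta_2$ and $\delta_0$ holds by construction; compatibility with $\delta_1$ is exactly the hypothesis $F(\delta_1\,\sigma) = F(\delta_0\,\sigma)\cdot F(\delta_2\,\sigma)$; and compatibility with the degeneracies $\sigma_0,\sigma_1$ reduces, via the simplicial identities relating faces and degeneracies, to $F$ sending each degenerate edge $\sigma_0(x)$ to an identity arrow, which is the defining law of a reflexive prefunctor. Since every morphism of $\DDelta_{\leq 2}$ factors as a composite of these generators, naturality on generators propagates to naturality in general.

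For part (ii), suppose $F,G \colon X \to \nerve_2\,C$ induce the same reflexive prefunctor; then their components at $[0]$ and $[1]$ agree. For a $2$-simplex $\sigma : X_2$, both $F(\sigma)$ and $G(\sigma)$ lie in $(\nerve C)_2$, and by naturality against $\delta_2$ and $\delta_0$ they have the same image under the injective spine map $(\nerve C)_2 \hookrightarrow (\nerve C)_1 \times_{(\nerve C)_0} (\nerve C)_1$, namely the composable pair $(F\delta_2\sigma, F\delta_0\sigma) = (G\delta_2\sigma, G\delta_0\sigma)$; hence $F(\sigma) = G(\sigma)$. Since a map of $2$-truncated simplicial sets is determined by its components at $[0],[1],[2]$, we conclude $F = G$.

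The main obstacle is bookkeeping rather than conceptual difficulty. In Lean one must push the strict-Segal description of $(\nerve C)_2$ and of its face and degeneracy operators through the coherence isomorphism $U^2_1\,\nerve_2\,C \cong U^r C$ and through the $\Fin(n+1)$ arithmetic that governs both $\cat{ComposableArrows}$ and the morphisms of $\DDelta$; one must also supply (or re-derive) the fact that a map of $2$-truncated simplicial sets is built from its three components plus naturality against the finitely many generating face and degeneracy maps, since the full generators-and-relations presentation of $\DDelta$ is not yet available. The case analysis over these generators, together with keeping straight which face of a $2$-simplex names which edge and which vertex, is where the bulk of the formalization effort lies.
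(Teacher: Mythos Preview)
Your proposal is correct and matches the paper's (final) approach: construct the three components from the reflexive prefunctor data and the composable-pair description of $(\nerve C)_2$, then verify naturality on the eight generating face and degeneracy maps of $\DDelta_{\leq 2}$, and for part (ii) use naturality against $\delta_0,\delta_2$ together with the injectivity of the spine map on $(\nerve C)_2$. The paper adds that the reduction to generators was supplied during review via Riou's \cdoc{MorphismProperty} argument rather than a full generators-and-relations presentation, and it emphasizes---as you do in your final paragraph---that the real cost lies in the \cdoc{Functor.ext} and heterogeneous-equality bookkeeping when comparing elements of $(\nerve C)_n$ as functors $\Fin(n{+}1)\to C$.
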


We discuss the proof of these results in \S\ref{sec:naturality}.

\begin{proposition}[\cdocTwo{nerve}{Adj}]\label{prop:2-truncated-nerve-adj}
The 2-truncated homotopy category functor is left adjoint to the 2-truncated nerve:
$ \begin{tikzcd} \Cat\ \arrow[r, bend right=20, "\nerve_2"' pos=.6] \arrow[r, phantom, "\bot"] & \Set^{\DDelta_{\leq 2}^\op}. \arrow[l, bend right=20, "\ho_2"' pos=.4] \end{tikzcd}$
\end{proposition}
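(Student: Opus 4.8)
The plan is to imitate the proof of Proposition~\ref{prop:free-refl-quiv-adj}: construct the unit $\eta \colon \id \Rightarrow \nerve_2\,\ho_2$ and counit $\epsilon \colon \ho_2\,\nerve_2 \Rightarrow \id$ by hand, then verify the two triangle identities. Two tools carry most of the weight: the universal property of the quotient functor $q_X \colon F^r\,U^2_1\,X \to \ho_2\,X$ from Definition~\ref{defn:homotopy-category}, and the isomorphism $U^2_1\,\nerve_2\,C \cong U^r\,C$ together with the lifting and extensionality statements of Proposition~\ref{prop:to-nerve2-mk}. The counit at a category $C$ is defined via the universal property of $q_{\nerve_2\,C}$: postcomposing the isomorphism $F^r\,U^2_1\,\nerve_2\,C \cong F^r\,U^r\,C$ with the counit $\epsilon^r_C \colon F^r\,U^r\,C \to C$ of $F^r \dashv U^r$ gives a functor out of $F^r\,U^2_1\,\nerve_2\,C$, and this functor respects the hom relation generated by the 2-simplices because a 2-simplex of $\nerve\,C$ is a functor $\Fin(3) \to C$, whose functoriality records that the image of $\delta_1\,\sigma$ is the composite of the images of $\delta_2\,\sigma$ and $\delta_0\,\sigma$ --- exactly the identity that needs to hold in $C$; hence it factors through $q_{\nerve_2\,C}$ to define $\epsilon_C$, with naturality following from the uniqueness clause of the quotient's universal property.

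The unit at a 2-truncated simplicial set $X$ is obtained by lifting the composite reflexive prefunctor
\[ U^2_1\,X \xrightarrow{\;\eta^r_{U^2_1\,X}\;} U^r\,F^r\,U^2_1\,X \xrightarrow{\;U^r\,q_X\;} U^r\,\ho_2\,X \;\cong\; U^2_1\,\nerve_2\,\ho_2\,X, \]
where $\eta^r$ is the unit of $F^r \dashv U^r$. This prefunctor meets the hypothesis of Proposition~\ref{prop:to-nerve2-mk}(i): for a 2-simplex $\sigma : X_2$ it sends the diagonal edge $\delta_1\,\sigma$ to $q_X$ of the generating arrow on $\delta_1\,\sigma$, which by the definition of the hom relation coincides with $q_X(\delta_0\,\sigma) \cdot q_X(\delta_2\,\sigma)$, i.e.\ with the composite of the images of $\delta_0\,\sigma$ and $\delta_2\,\sigma$ under this prefunctor. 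Proposition~\ref{prop:to-nerve2-mk}(i) therefore yields a map $\eta_X \colon X \to \nerve_2\,\ho_2\,X$ of 2-truncated simplicial sets; its naturality is checked by applying the extensionality statement Proposition~\ref{prop:to-nerve2-mk}(ii) to reduce to naturality of the underlying reflexive prefunctors, hence to naturality of $\eta^r$ and of $q$.

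It remains to verify the triangle identities. For the composite $\nerve_2\,C \xrightarrow{\eta_{\nerve_2\,C}} \nerve_2\,\ho_2\,\nerve_2\,C \xrightarrow{\nerve_2\,\epsilon_C} \nerve_2\,C$, both legs are maps of 2-truncated simplicial sets, so Proposition~\ref{prop:to-nerve2-mk}(ii) reduces the claim to comparing their underlying reflexive prefunctors, which after transport along $U^2_1\,\nerve_2 \cong U^r$ is precisely the corresponding triangle identity for $F^r \dashv U^r$ from Proposition~\ref{prop:free-refl-quiv-adj}. For the composite $\ho_2\,X \xrightarrow{\ho_2\,\eta_X} \ho_2\,\nerve_2\,\ho_2\,X \xrightarrow{\epsilon_{\ho_2\,X}} \ho_2\,X$, we precompose with $q_X$ and invoke the universal property of the quotient to reduce to the equation $\epsilon_{\ho_2\,X} \circ \ho_2\,\eta_X \circ q_X = q_X$, which unwinds using naturality of $q$, the defining equalities of $\eta_X$ and $\epsilon_C$, naturality of $\epsilon^r$, and the other triangle identity for $F^r \dashv U^r$ --- exactly as in the proof of Proposition~\ref{prop:free-refl-quiv-adj}.

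The step I expect to be the genuine obstacle is the construction of the unit. In Proposition~\ref{prop:free-refl-quiv-adj} the unit arose by promoting a prefunctor to a reflexive prefunctor, which amounted to little more than noting that it preserves the chosen endo-arrows; here one must instead manufacture an honest map of 2-truncated simplicial sets --- data on $X_0$, $X_1$, $X_2$ compatible with every face and degeneracy map of $\DDelta_{\leq 2}$ --- out of a reflexive prefunctor that only records the action on $X_0$ and $X_1$. This is exactly why Proposition~\ref{prop:to-nerve2-mk} is isolated as a separate result; granting it, the rest of the adjunction is a diagram chase of the kind already rehearsed for Proposition~\ref{prop:free-refl-quiv-adj}.
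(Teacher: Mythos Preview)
Your proposal is correct and follows essentially the same approach as the paper: construct the counit via the quotient's universal property applied to $\epsilon^r$ (after transporting along the isomorphism $\phi \colon U^2_1\,\nerve_2 \cong U^r$), construct the unit by lifting $U^r q_X \circ \eta^r_{U^2_1 X}$ via Proposition~\ref{prop:to-nerve2-mk}(i), and verify the triangle identities by the two reductions you describe. The only detail you leave implicit is that the isomorphism $\phi$ is itself natural and that its naturality (together with cancelling $\phi$ against its inverse) is one of the ingredients in the diagram chases for both triangle identities and for the naturality of $\eta$.
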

\begin{proof}
We construct the adjunction by defining the unit and counit and verifying the triangle identities. The component of the counit $\epsilon_
C$ at a category $C$ is defined by appealing to the universal property of the quotient functor $q_{\nerve_2\, C}$ in the following diagram:
\[ \begin{tikzcd}[column sep=large] F^r\, U^2_1\, \nerve_2\, C \arrow[r, "q_{\nerve_2\, C}"] \arrow[d, "F^r\phi_C"', "\cong"] & \ho_2\, \nerve_2\, C \arrow[d, dashed, "\epsilon_C"] \\ F^r\, U^r\, C \arrow[r, "\epsilon^r_C"'] & C, \end{tikzcd}\]
Modulo a natural isomorphism $\phi \colon U^2_1\,\nerve_2 \cong U^r$ between the two constructions of the reflexive quiver underlying a category, the counit component $\epsilon^r_C$ of the adjunction $F^r \dashv U^r$ respects the defining hom relation. Naturality is again established using the uniqueness of the universal property of the quotient functor.

The unit components are constructed by applying Proposition \ref{prop:to-nerve2-mk}(i) to the composition of reflexive prefunctors defined below, which thus satisfies the defining equation:
\[ \begin{tikzcd} U^2_1\, X \arrow[r, "U^2_1 \eta_X", dashed ] \arrow[d, "\eta^r_{U^2_1X}"'] & U^2_1\, \nerve_2\, \ho_2\, X \arrow[d, "\phi_{\ho_2 X}", "\cong"'] \\ U^r\, F^r\, U^2_1\, X \arrow[r, "U^rq_X"'] & U^r\, \ho_2\, X. \end{tikzcd} \] By Proposition \ref{prop:to-nerve2-mk}(ii), naturality can be checked at the level of reflexive prefunctors, at which point it follows from naturality of $\eta^r$, $q$, and $\phi$.

The proofs of the triangle equalities require diagram chases. To verify commutativity of
\[ \begin{tikzcd}[sep=large] \nerve_2\, C \arrow[r, "\eta_{\nerve_2\,C}"] \arrow[rr, bend right=15, equals] & \nerve_2\,\ho_2\,\nerve_2\, C \arrow[r, "\nerve_2\,\epsilon_C"] & \nerve_2\,C, \end{tikzcd}\]
we use Proposition \ref{prop:to-nerve2-mk}(ii) to instead verify commutativity of the underlying reflexive prefunctors. This diagram chase follows from the defining equalities for $\eta$ and $\epsilon$, naturality of $\phi$ and $\eta^r$, and   the corresponding triangle equality for $F^r \dashv U^r$, modulo cancelling $\phi$ with its inverse.  These steps occupy 8 lines of a 25 line proof due to the challenges described in \S\ref{sec:expected-challenges}.

To verify commutativity of
\[ \begin{tikzcd} \ho_2\, X \arrow[r, "\ho_2 \eta_X"] \arrow[rr, bend right=15, equals] & \ho_2\,\nerve_2\,\ho_2\,X \arrow[r, "\epsilon_{\ho_2\,X}"] & \ho_2\, X, \end{tikzcd}\] we use the universal property of the quotient functor to instead verify commutativity of
\[ \begin{tikzcd} F^r\,U^2_1\, X \arrow[r, "q_X"] \arrow[rrr, bend right=10, "q_X"'] &  \ho_2\, X \arrow[r, "\ho_2 \eta_X"]  & \ho_2\,\nerve_2\,\ho_2\,X \arrow[r, "\epsilon_{\ho_2\,X}"] & \ho_2\, X,  \end{tikzcd}\] which follows from naturality of $q$, the defining equalities for $\eta$ and $\epsilon$, naturality of $\epsilon^r$, and  the corresponding triangle equality for $F^r \dashv U^r$, modulo cancelling $\phi$ with its inverse. These steps occupy 8 lines of a 24 line proof due to the challenges described in \S\ref{sec:expected-challenges}.
\end{proof}

As discussed, Propositions \ref{prop:2-truncated-nerve-adj} and \ref{prop:2-coskeletal} imply Theorem \ref{thm:nerveAdj}, proving our first main theorem. To show that the adjunction $\ho \dashv \nerve$ is reflective and conclude that $\Cat$ has colimits, we must show that the nerve functor is fully faithful. As the nerve is isomorphic to the composite of the functors $\cosk_2$ and $\nerve_2$ and $\cosk_2$ is fully faithful (since $\DDelta_{\leq 2} \hookrightarrow \DDelta$ is the inclusion of a full subcategory), it suffices to show that the 2-truncated nerve is fully faithful.

\begin{lemma}[\cdocTwo{nerveFunctor}{.faithful}]
  The 2-truncated nerve functor is faithful.
\end{lemma}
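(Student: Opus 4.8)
The plan is to deduce the faithfulness of $\nerve_2$ from the evident faithfulness of the forgetful functor $U^r \colon \Cat \to \rQuiv$. Suppose $F, G \colon C \to D$ are functors with $\nerve_2\,F = \nerve_2\,G$ as morphisms of $2$-truncated simplicial sets. Applying the underlying-reflexive-quiver functor $U^2_1 \colon \Set^{\DDelta_{\leq 2}^\op} \to \rQuiv$ of Example~\ref{ex:underlying-refl-quiv} gives $U^2_1(\nerve_2\,F) = U^2_1(\nerve_2\,G)$, an equality of reflexive prefunctors. Conjugating by the natural isomorphism $\phi \colon U^2_1\,\nerve_2 \cong U^r$ from the proof of Proposition~\ref{prop:2-truncated-nerve-adj} --- whose naturality gives $U^r F = \phi_D \circ U^2_1(\nerve_2\,F) \circ \phi_C^{-1}$, and similarly for $G$ --- we obtain $U^r F = U^r G$.

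It remains only to observe that $U^r$ is faithful. A reflexive prefunctor out of $U^r C$ is exactly the data of a map on objects together with an action on hom-types (the functoriality axioms being mere propositions), so the equation $U^r F = U^r G$ records precisely that $F$ and $G$ agree on objects and, over that equality, on morphisms; functor extensionality (\cdoc{Functor.ext}) then yields $F = G$. One could equally bypass $U^r$ and argue straight from the hypothesis: equality of the $[0]$- and $[1]$-components of $\nerve_2\,F$ and $\nerve_2\,G$ amounts, under the identifications of $\nerve\,C_0$ with $\ob\,C$ and of $\nerve\,C_1$ with the type of arrows of $C$, to $F$ and $G$ having the same action on objects and on morphisms, and the same extensionality lemma again gives $F = G$.

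The only real friction is the dependent-type bookkeeping at the last step: \cdoc{Functor.ext} wants the morphism components compared heterogeneously over the proof that the object assignments coincide, so whichever equality one extracts must be transported into exactly that form, after unfolding how $U^2_1$ --- equivalently, evaluation at $[0]$ and $[1]$ --- acts on $\nerve_2$ of a functor. This is the kind of transport-wrangling flagged in \S\ref{sec:expected-challenges}, but it is essentially the whole proof: faithfulness, unlike the fullness packaged in Proposition~\ref{prop:to-nerve2-mk}(i), requires no genuine information about $2$-simplices beyond reading off the $0$- and $1$-dimensional data, so I expect the formalization to occupy only a handful of lines.
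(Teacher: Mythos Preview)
Your proposal is correct and follows essentially the same approach as the paper: both arguments reduce faithfulness of $\nerve_2$ to faithfulness of $U^r$ via the natural isomorphism $\phi \colon U^2_1\,\nerve_2 \cong U^r$. The paper packages this as the one-line observation that $\nerve_2$ is the first factor of a composite isomorphic to a faithful functor (invoking \cdoc{Functor.Faithful.of\_comp\_iso}), whereas you unpack that inference by hand, but the content is the same.
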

\begin{proof}
  We have a natural isomorphism $\phi \colon U_1^2\, \nerve_2 \cong U^r$ and the functor $U^r$ is faithful. Thus, as the first functor of a faithful composite, $\nerve_2$ is faithful.
\end{proof}
\begin{lstlisting}
instance !\cdocTwo{nerveFunctor}{.faithful}! : !\cdocTwo[Nerve.]{nerveFunctor}{}!.!\cdoc[Functor.]{Faithful}! :=
  !\cdoc{Functor.Faithful.of\_comp\_iso}!
    (G := !\ldocTwo[SSet.]{oneTruncation}{}!) (H := !\cdoc{ReflQuiv.forget}!) !\ldocTwoTwo[SSet.]{OneTruncation}{.ofNerve}{.natIso}!
\end{lstlisting}

With considerably more effort, discussed in \S\ref{sec:full}, we can also show:

\begin{lemma}[\cdocTwo{nerveFunctor}{.full}]\label{lem:nerve2-full}
The 2-truncated nerve functor is full.
\end{lemma}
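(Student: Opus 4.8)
The plan is to produce, from an arbitrary map of $2$-truncated simplicial sets $F \colon \nerve_2\, C \to \nerve_2\, D$, an honest functor $G \colon C \to D$ with $\nerve_2\, G = F$, thereby witnessing surjectivity of $\Hom_{\Cat}(C,D) \to \Hom(\nerve_2\, C, \nerve_2\, D)$. First I would pass to underlying reflexive prefunctors: conjugating $U^2_1\, F$ by the natural isomorphism $\phi \colon U^2_1\,\nerve_2 \cong U^r$ yields a reflexive prefunctor $\overline{F} \colon U^r\, C \to U^r\, D$, i.e.\ a function on objects together with an action on hom-sets that preserves the specified reflexivity (identity) arrows. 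The whole content of the proof is to check that $\overline{F}$ additionally preserves composition, since a reflexive prefunctor that does so is exactly the forgetful image of a functor.

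To verify this, fix a composable pair $f \colon a \to b$, $g \colon b \to c$ in $C$. Such a pair \emph{is} a functor $\Fin(3) \to C$, hence a $2$-simplex $\sigma \in (\nerve C)_2$, whose faces in the concrete model (a \cdoc{ComposableArrows}) are $\delta_2\,\sigma = f$, $\delta_0\,\sigma = g$, and $\delta_1\,\sigma = g \circ f$ — the long edge of the triangle is literally the composite of the two short edges. Applying $F$ and using that $F$ commutes with the three generating face maps $\delta^0,\delta^1,\delta^2 \colon [1] \to [2]$, we learn (modulo the relevant components of $\phi$) that $F\sigma \in (\nerve D)_2$ is a $2$-simplex with short edges $\overline{F}(f)$, $\overline{F}(g)$ and long edge $\overline{F}(g \circ f)$. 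But $F\sigma$ is itself a functor $\Fin(3) \to D$, so its long edge is forced to be $\delta_0(F\sigma) \circ \delta_2(F\sigma) = \overline{F}(g) \circ \overline{F}(f)$. Comparing the two expressions for $\delta_1(F\sigma)$ gives $\overline{F}(g \circ f) = \overline{F}(g) \circ \overline{F}(f)$, so $\overline{F} = U^r\, G$ for a functor $G \colon C \to D$.

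It then remains to identify $\nerve_2\, G$ with $F$. By naturality of $\phi$, the underlying reflexive prefunctor of $\nerve_2\, G$ is $\phi$-conjugate to $U^r\, G = \overline{F}$, hence equal to $U^2_1\, F$; so $\nerve_2\, G$ and $F$ are parallel maps $\nerve_2\, C \to \nerve_2\, D$ with the same underlying reflexive prefunctor, and Proposition~\ref{prop:to-nerve2-mk}(ii) forces $\nerve_2\, G = F$. Thus $\nerve_2$ is full; together with the preceding faithfulness lemma it is fully faithful, and since $\cosk_2$ is fully faithful so is $\nerve \cong \cosk_2\,\nerve_2$, which is Theorem~\ref{thm:nerveFF} and, via the reflectivity of the nerve adjunction, Corollary~\ref{cor:cat-colimits}.

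The main obstacle I expect is not the argument above but the bookkeeping around $\phi \colon U^2_1\,\nerve_2 \cong U^r$: every statement about ``the action of $F$ on an arrow'' is secretly conjugated by components of $\phi$ and $\phi^{-1}$, and these transports must be cancelled consistently — the same source of friction the authors flag in \S\ref{sec:expected-challenges}. A secondary fiddly point is producing the witnessing $2$-simplex $\sigma$ and computing its three faces in Lean's explicit $\Fin$-indexed model of $\nerve C$, and confirming that compatibility of $F$ with the generating face maps really delivers the claimed identities on the short and long edges of $F\sigma$; this is elementary but combinatorially delicate.
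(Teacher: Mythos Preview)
Your proposal is correct and follows essentially the same route as the paper's proof in \S\ref{sec:full}: extract the underlying reflexive prefunctor from the $1$-truncation of $F$, verify functoriality by pushing the canonical $2$-simplex $\sigma_{g,f}$ through $F$ and invoking naturality with respect to the three face maps $\delta^i \colon [1] \to [2]$, and then appeal to Proposition~\ref{prop:to-nerve2-mk}(ii) to identify $\nerve_2\,G$ with $F$. You also correctly anticipate the pain point the paper highlights --- the naturality squares yield equalities in $(\nerve_2\,D)_1$ that become \emph{heterogeneous} equalities of arrows in $D$, which must be strung together before collapsing to the desired homogeneous equality $\overline{F}(g \circ f) = \overline{F}(g) \circ \overline{F}(f)$.
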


These lemmas combine to prove Theorem \ref{thm:nerveFF}, our second main theorem, and these results together give Corollary \ref{cor:cat-colimits} that $\Cat$ has colimits.

\section{Challenges}\label{sec:challenges}

Although the mathematics of Theorems \ref{thm:nerveAdj} and \ref{thm:nerveFF} is well established in the literature, and we came to the project suitably armed with category theoretic and Lean formalization expertise, the project ended up being quite challenging, for both expected and unexpected reasons. Below, we will discuss some of the aspects of the formalization that posed the most difficulties, and what we did to resolve or work around the issues.

\subsection{Bundling categories}\label{sec:expected-challenges}



In \libmathlib, many notions come in two flavors of presentation, referred to as the ``bundled'' and ``(partially) unbundled'' presentations.\footnote{There is also a ``fully unbundled'' presentation, but \libmathlib{} does not use it and we will not discuss it.} In the unbundled presentation, a category is given as a type variable $C$ together with a typeclass instance argument \texttt{\cdoc{Category}\;$C$} which provides the rest of the data for a category whose type of objects is $C$. This allows for a form of formal synecdoche, where one need only explicitly mention the type of objects and the rest of the structure comes along for the ride. In bundled style a category is an element $C:\cdoc{Cat}$, with suitable projections $\ob\;C:\mathsf{Type}$ and $\Hom:\ob\;C\to\ob\;C\to\mathsf{Type}$.

Generally, \libmathlib{} prefers the unbundled form, but the bundled form is necessary if we need to consider the category of categories, that is, \texttt{\cdoc{Category}\;\cdoc{Cat}}, and our main theorem is about \cdoc{Cat} so we cannot avoid it completely. A similar situation occurs for functors, which appear unbundled as $C\rightharpoons D$ and also bundled as $\Hom_{\cdoc{Cat}}(C,D)$.\footnote{Note the former notion of functor is strictly speaking more general than the latter, in which the universe levels of the domain category must match those of the codomain.} There is functor composition $F\ggg G$ and morphism composition $F\gg_{\cdoc{Cat}} G$, and so on. Not only are many notions duplicated, but by applying lemmas coming from each context one can end up with a mixture of both notions in goals, and simplification would frequently get stuck in this situation. We eventually mitigated this issue by unbundling as early as possible, stating lemmas only in the unbundled form, and packaging them up only for the higher order statements about operations on \cdoc{Cat}.

\subsection{Equality of functors}\label{sec:functor-ext}

Our main theorem fundamentally deals with the 1-categorical (rather than 2-categorical) structure of $\Cat$. The very statement that $\Cat$ has small colimits involves equality of functors, as does the definition of the nerve --- a simplicial \emph{set} rather than a simplicially-indexed category-valued pseudofunctor.\footnote{The nerve of $C$ could be regarded as a pseudofunctor $\nerve\;C \colon \DDelta^\op \to \Cat$ valued in $\Cat$, sending the object $[n] : \DDelta$ to the category of functors $\Fin(n+1) \to C$, but this extra coherence data is unnecessary. The type-valued mapping  $\nerve\;C \colon \DDelta^\op \to \Set$ is strictly functorial, and indeed (due to the care taken in formalizing the definition of \cdoc{ComposableArrows}) the \lstinline|map_id| and \lstinline|map_comp| fields of \cdoc{nerve} are filled by \lstinline|rfl|.} There are advantages to working strictly (1-categorically) rather than weakly (2-categorically or bicategorically) when possible. In future work, described in \S\ref{sec:future}, we plan to use our homotopy category functor to convert from simplicial to categorical enrichments. As we have formalized this as a strict functor between 1-categories we can tap into existing \libmathlib{} developments of change of base in enriched category theory.

Lean's \libmathlib{} provides an extensionality theorem for functors, contributed by Reid Barton, but note the warning:
\begin{lstlisting}
/-- Proving equality between functors. This isn't an extensionality lemma, because usually you don't really want to do this. -/
theorem !\cdoc[Functor.]{ext}! {F G : C ⥤ D} (h_obj : ∀ X, F.obj X = G.obj X) (h_map : ∀ X Y f,
    F.map f = !\cdoc{eqToHom}! (h_obj X) ≫ G.map f ≫ !\cdoc{eqToHom}! (h_obj Y).symm) : F = G
\end{lstlisting}
So we knew in advance there would be trouble coming and needed to structure our proofs in specific ways to take advantage of \cdoc{eqToHom}.

At the core of this warning is the following issue: When one has an equality of objects, or an equality of functors (which involves an equality of objects --- the \lstinline|h_obj| assumption above), this induces an equality of types, namely $\Hom(A,B)=\Hom(A,B')$ given $B=B'$. In dependent type theory, if one has $f:\Hom(A,B)$ and an equality $h:B=B'$, the plain expression $f:\Hom(A,B')$ does not typecheck (unless $B$ and $B'$ are \emph{definitionally equal}). However, it is possible to use $h$ to \emph{cast} $f$ to the correct type, producing a term $\cast\,h\,f:\Hom(A,B')$. This is also known as ``rewriting'' and it is a common technique in proofs (using the \lstinline|rw| tactic), but it is hazardous when applied to ``data'' such as $f$, because $\cast\,h\,f$ is not the same term as $f$ and so mismatches can arise between expressions such as $\cast\,h\,(f\circ g)$ and $(\cast\,h\,f)\circ g$. It is possible to prove these are equal, but it generally requires a rather delicate ``induction on equality'' to prove, with a manually written induction motive. Complications with transport of this nature are often referred to as ``DTT hell.''

The \libmathlib{} statement of \cdoc{Functor.ext} is employing a technique to avoid this issue, which is to make use of the category structure to help alleviate the pains associated with using the $\cast$ function directly, by first converting $h:B=B'$ into $\cdoc{eqToHom}\,h:\Hom(B,B')$, so that $(\cdoc{eqToHom}\,h) \circ f:\Hom(A,B')$. This presents the casting of $f$ as a composition, which means that many more lemmas from category theory apply (such as associativity of composition), and \cdoc{eqToHom} itself has many lemmas for how it factors over various operations.

For the most part, this technique did its job. Although we obtained many goals that involved these \cdoc{eqToHom} morphisms, after formalizing appropriate lemmas, the simplifier was able to push them out of the way. For instance, we added a \ldoc{Quiver.homOfEq} to perform an analogous casting for (reflexive) quivers (which lack composition) and proved various lemmas by induction on equality. This is the kind of complication that essentially never appears in informal category theory, due to its more extensional handling of morphism typing.

\subsection{Proving functoriality}\label{sec:full}

Another example of DTT hell arises in the proof of \cdocTwo{nerveFunctor}{.full}, that the 2-truncated nerve functor $\nerve_2\colon \Cat \to \Set^{\DDelta_{\leq 2}^\op}$ is full. Given a pair of categories $C$ and $D$ and a map $F \colon \nerve_2\, C \to \nerve_2\, D$ between their 2-truncated nerves, our task is to lift this to a functor between $C$ and $D$. The underlying reflexive prefunctor is defined by the 1-truncation of $F$, but it remains to show that this reflexive prefunctor is \emph{functorial}, preserving composition of morphisms in $C$ in addition to identities.

The idea of the proof is to note that the composable morphisms $k \circ h$ define a 2-simplex $\sigma_{k,h} : (\nerve_2\,C)_2$. The 2-simplex $F(\sigma_{k,h}) : (\nerve_2\,D)_2$ defines a functor $\Fin(3) \to D$, which gives rise to a commutative triangle involving three arrows in $D$. Using the naturality of $F$ with respect to the three morphisms $\delta^i \colon [1] \to [2]$, this may be converted into the desired equality $F(k \circ h) = Fk \circ Fh$. But these naturality equalities live in the type $(\nerve_2\,D)_1$ of functors $\Fin(2) \to D$, which give rise to \emph{heterogeneous} equalities between the corresponding arrows in $D$. We were forced to first prove and then string together a sequence of heterogeneous equalities before finally concluding the desired equality between arrows belonging to the same hom-type. It then remained to show that the 2-truncated nerve carries this lifted functor to the original map $F$, which is proven using Proposition \ref{prop:to-nerve2-mk}(ii).

\subsection{Proving naturality}\label{sec:naturality}

In Proposition \ref{prop:to-nerve2-mk}(i), our aim is to define a map of 2-truncated simplicial sets $F : X \to \nerve_2\, C$ from a map between the underlying reflexive quivers. Such a map is a natural transformation with three components $F_0$, $F_1$, and $F_2$ corresponding to the three objects $[0], [1], [2] : \DDelta_{\leq 2}$, which can be constructed straightforwardly from the hypotheses. But complications arise in the proof of naturality, which amounts to establishing commutative squares of the form
\begin{equation}\label{eq:to-nerve-naturality}
  \begin{tikzcd} X_n \arrow[r, "F_n"] \arrow[d, "{-\cdot\alpha}"'] & (\nerve_2\,C)_n \arrow[d, "{-\cdot\alpha}"] \\ X_m \arrow[r, "F_m"'] & (\nerve_2\,C)_m  \end{tikzcd}
\end{equation}
for each morphism $\alpha \colon [m] \to [n]$ in $\DDelta_{\leq 2}$ (there are 31 such morphisms). One annoyance, discussed at the start of \S\ref{sec:ho2}, is the fact that \libmathlib{} does not yet know that $\DDelta_{\leq 2}$ is generated by the three degeneracy maps $\sigma^i$ and the five face maps $\delta^i$. Our initial formalization instead explicitly treated the nine cases presented by the choice of domain and codomain objects, with the cases involving a morphism with domain $[2]$ reducing to the cases involving domain $[0]$ or $[1]$ by the 2-coskeletality of the nerve. \fixme{Really what I want to cite here is the fact that \cdocTwo{nerve}{seagull} is a monomorphism but that instance has a horrible name. Can you help?} Once we fixed domain and codomain objects there were often further case splits involving the specific nature of the morphism $\alpha \colon [m] \to [n]$.

When we submitted this as a pull request to \libmathlib{}, the reviewer Jo\"{e}l Riou suggested we think of this result as the task of showing that the \cdoc{MorphismProperty} of satisfying the naturality condition \eqref{eq:to-nerve-naturality} holds for every morphism $\alpha$ in $\DDelta_{\leq 2}$. An easier task than showing that the simplex category can be presented by generators and relations is to show that the \cdoc[MorphismProperty.]{naturalityProperty} of its arrows follows from the cases corresponding to faces and degeneracy maps, as Riou formalized for us in \ldoc[SimplexCategory.]{Truncated.morphismProperty\_eq\_top}. This reduces the problem to the eight cases corresponding to generating morphisms $\alpha \colon [m] \to [n]$ in $\DDelta_{\leq 2}$, each of which still involves establishing an equality between functors $\Fin(m+1) \to C$.

The extensionality result of Proposition \ref{prop:to-nerve2-mk}(ii) is simpler but still less innocuous than it appears. Two natural transformations $F$ and $G$ are equal just when their three components are equal, and as these are functions between types it suffices to show they define the same mapping on terms. However, this involves proving equalities in the types $(\nerve_2\,C)_0$, $(\nerve_2\,C)_1$, and $(\nerve_2\,C)_2$, which are types of functors, and thus these equalities involved a fair amount of pain. In the final case $F_2 = G_2$, naturality again provided us with equalities in the type $(\nerve_2\,C)_1$ of functors $\Fin(2) \to C$ which gave rise to heterogeneous equalities between the mappings on arrows of the functors $\Fin(3) \to C$ we were tasked with identifying.

\subsection{Proving nerves are 2-coskeletal}\label{sec:strict-segal}


By a folklore result involving Reedy category theory \cite{Reedy}, a simplicial set $X$ is $2$-\emph{coskeletal}, meaning that the adjunction unit component $\eta_X\colon X \to \cosk_2\, \tr_2\, X$ is an isomorphism, just when every $k$-simplex boundary in $X$ can be filled to a simplex, for all $k > 2$. This property would be easy to check in the case where $X$ is the nerve of a category, but unfortunately this characterization of 2-coskeletal simplicial sets has not been formalized. Thus, we directly unpack the invertible unit component definition into a condition that we can actually check.



Recall the functor $\cosk_2$ is defined by right Kan extension along the inclusion $\DDelta_{\leq 2}^\op \hookrightarrow \DDelta^\op$. Thus, the condition of a simplicial set $X$ being 2-coskeletal asserts that the identity natural transformation defines a right Kan extension:
\begin{equation}\label{eq:cosk-kan-ext} \begin{tikzcd} & \DDelta^\op \arrow[dr, "X"] \arrow[d, phantom, "\Downarrow\id"]\\ \DDelta_{\leq 2}^\op \arrow[ur, hook] \arrow[r, hook] & \DDelta^\op \arrow[r, "X"'] & \Set. \end{tikzcd}
\end{equation}

\begin{lstlisting}
/-- A simplicial object `X' is `n'-coskeletal when it is the right Kan extension of its restriction along `(Truncated.inclusion n).op' via the identity natural transformation. -/
class !\cdoc[SimplicialObject.]{IsCoskeletal}! (X : !\cdoc{SimplicialObject}! C) (n : ℕ) : Prop where
  isRightKanExt : !\cdoc[Functor.]{IsRightKanExtension}! X (𝟙 ((!\ldoc[SimplexCategory.]{Truncated.inclusion}! n).op ⋙ X))

theorem !\cdoc[SimplicialObject.]{isCoskeletal\_iff\_isIso}! :
    X.!\cdoc[SimplicialObject.]{IsCoskeletal}! n ↔ !\cdoc{IsIso}! ((!\cdoc[SimplicialObject.]{coskAdj}! n).unit.app X) := by
  rw [!\cdoc[SimplicialObject.]{isCoskeletal\_iff}!]
  exact !\cdoc[Functor.]{isRightKanExtension\_iff\_isIso}! ((!\cdoc[SimplicialObject.]{coskAdj}! n).unit.app X)
    ((!\cdoc[SimplicialObject.]{coskAdj}! n).counit.app _) (𝟙 _) ((!\cdoc[SimplicialObject.]{coskAdj}! n).left_triangle_components X)
\end{lstlisting}
As the category $\Set$ is complete, any such right Kan extension \eqref{eq:cosk-kan-ext} is \emph{pointwise}, meaning that $X$ is 2-coskeletal if and only if for each $n : \NN$, a canonical cone --- with summit $X_n$ over a diagram with objects $X_j$ indexed by arrows $[j] \to [n]$ in $\DDelta$ with $j \leq 2$ --- is a limit cone, the definition of which we now recall:
\begin{lstlisting}
/-- A cone `t' on `F' is a limit cone if each cone on `F' admits a unique cone morphism to `t'. -/
structure !\cdoc[Limits.]{IsLimit}! (t : !\cdoc[Limits.]{Cone}! F) where
  /-- There is a morphism from any cone point to `t.pt' -/
  lift : ∀ s : !\cdoc[Limits.]{Cone}! F, s.pt ⟶ t.pt
  /-- The map makes the triangle with the two natural transformations commute -/
  fac : ∀ (s : !\cdoc[Limits.]{Cone}! F) (j : J), lift s ≫ t.π.app j = s.π.app j
  /-- It is the unique such map to do this -/
  uniq : ∀ (s : !\cdoc[Limits.]{Cone}! F) (m : s.pt ⟶ t.pt)
    (_ : ∀ j : J, m ≫ t.π.app j = s.π.app j), m = lift s
\end{lstlisting}
Thus, for the simplicial set $\nerve\ C$, we must:
\begin{romanenumerate}
\item Construct \texttt{lift}, a function valued in functors $\Fin(n + 1) \to C$.
\item Prove \texttt{fac}, an equality between functions valued in functors $\Fin(j + 1) \to C$ for $j = 0, 1, 2$.
\item Prove \texttt{uniq}, an equality between functions valued in functors $\Fin(n + 1) \to C$.
\end{romanenumerate}

We submitted a proof along these lines in our initial \libmathlib{} pull request, involving repeated painful appeals to \cdoc{Functor.ext}. During the PR review process, Jo\"{e}l Riou suggested a useful abstraction boundary, using the following characterization of simplicial sets defined by nerves of categories.

\begin{proposition}[\cdoc{Nerve.strictSegal}]\label{prop:strict-segal}
  For a simplicial set $X$ the following are equivalent:
  \begin{romanenumerate}
    \item $X$ is isomorphic to the nerve of a category.\footnote{In fact, when this condition holds, $X$ is isomorphic to the nerve of the homotopy category $\ho(X)$.}
    \item $X$ satisfies the \ldoc[SSet.]{StrictSegal} condition: for all $n : \NN$, the canonical map that carries an $n$-simplex of $X$ to the path of edges defined by its \ldoc[SSet.]{spine} is an equivalence.
  \end{romanenumerate}
\end{proposition}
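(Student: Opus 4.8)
The plan is to prove the two implications separately, with almost all of the work in (ii) $\Rightarrow$ (i). For (i) $\Rightarrow$ (ii): the \ldoc[SSet.]{spine} map is a natural transformation of simplicial sets, so the condition ``every component is a bijection'' is invariant under isomorphism of simplicial sets; hence it suffices to prove that $\nerve\, C$ is \ldoc[SSet.]{StrictSegal} for an arbitrary category $C$. But $(\nerve\, C)_n$ is by definition the type of functors $\Fin(n+1) \to C$, and the spine sends such a functor to the string of images of the generating edges $i \to i+1$. This is a bijection: any tuple of composable arrows $c_0 \to c_1 \to \cdots \to c_n$ extends to a unique functor on $\Fin(n+1)$, since every other morphism of $\Fin(n+1)$ is a forced composite of generators and there are no parallel pairs of distinct morphisms imposing further constraints; the inverse of the spine map is precisely this gluing construction, and checking the round trips is routine.

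For (ii) $\Rightarrow$ (i), I would first extract a category $C$ from the StrictSegal data: take the type of objects to be $X_0$ and the morphisms to be the underlying reflexive quiver $U^2_1 X$ of Example~\ref{ex:underlying-refl-quiv}, so that a morphism $x \to y$ is a $1$-simplex $f$ with $\delta_1 f = x$, $\delta_0 f = y$, with $\sigma_0 x$ serving as $\id_x$. To compose $f \colon x \to y$ with $g \colon y \to z$, invoke the spine equivalence in dimension $2$: there is a unique $\sigma : X_2$ with $\delta_2\sigma = f$ and $\delta_0\sigma = g$, and one sets $g \circ f := \delta_1\sigma$. The unit laws follow from the degenerate $2$-simplices $\sigma_1 f$ and $\sigma_0 f$, and associativity from the unique $3$-simplex with spine $(f,g,h)$, whose faces $\delta_1$ and $\delta_2$ exhibit $(h\circ g)\circ f$ and $h\circ(g\circ f)$ as the same edge. (This $C$ will be isomorphic to $\ho(X) = \ho_2(\tr_2 X)$, matching the footnote.)

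It then remains to produce an isomorphism $X \cong \nerve\, C$. I would define a simplicial map between them and verify it is a levelwise bijection by factoring both sides through the $n$-fold fibered product $X_1 \times_{X_0} \cdots \times_{X_0} X_1$: the type $X_n$ maps to it by the spine (an equivalence, by hypothesis), and $(\nerve\, C)_n$ maps to it by the bijection of (i) applied to $C$ (whose $1$-simplices are literally the morphisms of $C$), and the comparison map is compatible with these two identifications. Naturality of the comparison in $[n]$ reduces, exactly as in \S\ref{sec:naturality}, to compatibility with the face and degeneracy maps.

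I expect the main obstacle to be the DTT bookkeeping already flagged in \S\ref{sec:functor-ext} and \S\ref{sec:full}: the elements of $(\nerve\, C)_n$ are \emph{functors} $\Fin(n+1) \to C$, so the equalities needed to pin down the comparison isomorphism, and those verifying the category axioms of $C$, become heterogeneous equalities between morphisms of $C$ living in different hom-types, forcing the \cdoc{eqToHom}/\ldoc{Quiver.homOfEq} manipulations. One must also be careful that the uniquely chosen filler $2$- and $3$-simplices interact with the simplicial identities on the nose rather than merely up to the spine equivalence. If, as suggested by the abstraction boundary Riou proposed, the ambient \ldoc[SSet.]{StrictSegal} infrastructure already packages the associated category together with the comparison isomorphism, then this final step instead amounts to citing and assembling those pieces.
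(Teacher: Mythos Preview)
Your plan for (i)$\Rightarrow$(ii) is correct and matches the paper. Note, however, that this is the \emph{only} direction the paper actually proves: the label \cdoc{Nerve.strictSegal} attached to Proposition~\ref{prop:strict-segal} refers precisely to the instance that nerves satisfy the strict Segal condition, and the paper says explicitly that (ii)$\Rightarrow$(i) is deferred (see \S\ref{sec:future}, first paragraph). So for the purposes of comparing against what was formalized, you have gone beyond the paper.

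That said, your proposed argument for (ii)$\Rightarrow$(i) is mathematically sound and in fact coincides with the paper's announced plan. The category $C$ you build from $X_0$, $X_1$, and the spine-filling in degree~2 is exactly $\ho_2(\tr_2 X) = \ho(X)$ (as you observe in your parenthetical), and your levelwise comparison $X_n \to (\nerve\,C)_n$ via the common identification with the iterated fibered product $X_1\times_{X_0}\cdots\times_{X_0}X_1$ is the component at $X$ of the unit $\eta$ of the nerve adjunction. The paper's future-work paragraph proposes precisely this: show that a strict Segal simplicial set is isomorphic, via the unit of \cdoc{nerveAdjunction}, to the nerve of its homotopy category. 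The only difference is packaging: the paper would invoke the already-constructed $\ho$ and $\eta$ rather than rebuild $C$ and the comparison by hand, which should spare you some of the naturality case analysis you anticipate from \S\ref{sec:naturality}.
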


As Proposition \ref{prop:strict-segal} explains, the only examples of strict Segal simplicial sets are nerves of categories. But it was cleaner to formalize the proof that a strict Segal simplicial set $X$ is 2-coskeletal because the functions involved in \lstinline|lift|, \lstinline|fac|, and \lstinline|uniq| are now valued in an abstract type $X_n$ which is equivalent to the type of paths of edges of length $n$ in $X$. Thus, we formalized:
\begin{lstlisting}
/-- A strict Segal simplicial set is 2-coskeletal. -/
def !\ldoc[SSet.StrictSegal.]{isPointwiseRightKanExtensionAt}! (n : ℕ) :
      (!\ldoc[SSet.Truncated.]{rightExtensionInclusion}! X 2).!\cdoc[Functor.RightExtension.]{IsPointwiseRightKanExtensionAt}! ⟨⦋n⦌⟩ where
  lift s x := ...
  fac s j := ...
  uniq s m hm := ...
\end{lstlisting}
together with a proof \cdoc{Nerve.strictSegal} of the implication (i)$\Rightarrow$(ii) in Proposition \ref{prop:strict-segal}.

\section{Related work}\label{sec:related-work}

There are myriad libraries of formalized mathematics that contain a substantial amount of category theory \cite{ BauerGrossLumsdaineShulmanSozeauSpitters2017, hott-in:agda, HuCarette2021, 1Lab, mathlib2020, CZH_Foundations-AFP,CZH_Elementary_Categories-AFP,CZH_Universal_Constructions-AFP, agda-unimath, VezzosiMoertbergAbel2021, UniMath}. Several papers on the subject \cite{GrossChlipalaSpivak, OKeefe2005} recall Harrison's assessment, based on the early efforts of \cite{HS-Constructive}, that ``category theory [is] notoriously hard to formalize in any kind of system'' \cite{harrison1996formalized}. Others describe additional complexities involved in developing a category theory library that is compatible with intensional identity types or homotopy type theory \cite{AKS-Univalent, HuCarette2021}.

A common feature of category theory libraries is a broad array of sophisticated categorical definitions, constructions, and theorems. To give a simplified illustration related to the theory of colimits, one will commonly find the general definition of a colimit, specific examples of colimits such as coproducts and coequalizers, and a proof that a colimit of any diagram shape can be built out of coequalizers and suitably-sized coproducts. What is less common is to find formalizations of applications of this general theory to specific categories, e.g., a proof that a specific category has coproducts and coequalizers and thus has all suitably-sized colimits --- with a common exception being the category of types and functions.\footnote{The \liboneLab{} also contains a formalization of limits and colimits in type-valued presheaf categories: \href{https://1lab.dev/Cat.Instances.Presheaf.Colimits.html}{\tt 1lab.dev/Cat.Instances.Presheaf.Colimits.html}}

More extensive examples of applications of category theory to specific categories can be found in the \libunimath{} library in Rocq \cite{UniMath}, with the categorical content originating from \cite{AKS-Univalent}, as well as Lean's \libmathlib{} \cite{mathlib2020}, whose category theory formalizations grew out of an experimental library developed by Kim Morrison \cite{Morrison}. Both libraries contain the construction of limits in $\Cat$. More explicitly, \libunimath{} contains a construction of equalizers between so-called ``strict categories'' (those whose objects and morphisms form \emph{sets}, rather than higher types).\footnote{See: \href{https://github.com/UniMath/UniMath/blob/master/UniMath/CategoryTheory/Limits/Examples/CategoryOfSetcategoriesLimits.v}{\tt UniMath/CategoryTheory/Limits/Examples/CategoryOfSetcategoriesLimits.v}} By contrast, \libmathlib{} directly formalizes the construction of the limit of a diagram of any shape as the category whose objects and hom-sets are defined by limits of the induced diagrams.\footnote{See: \href{https://leanprover-community.github.io/mathlib4_docs/Mathlib/CategoryTheory/Category/Cat/Limit.html}{\tt Mathlib/CategoryTheory/Category/Cat/Limit.html}} We do not know of any other formalizations of the nerve functor, the nerve adjunction, or colimits in $\Cat$.

Alternate pen-and-paper approaches to the construction of colimits in $\Cat$ can be given. Since coproducts in $\Cat$ are elementary to define --- created by the forgetful functor to reflexive quivers --- this reduces the problem to the construction of coequalizers. In \cite[5.1.7]{Borceux:1994hbI}, coequalizers are constructed in reference to the adjunction of Proposition \ref{prop:quiv-adj} lifted to the category of quivers with commutativity conditions.  In \cite{BBP:Generalized}, which Robert Maxton brought to our attention after our formalization was complete, coequalizers in $\Cat$ are constructed as quotient functors defined with respect to what the authors call a \emph{generalized congruence}, generalizing the composition-stable hom-wise equivalence relations called \emph{congruences} that we use here. This approach provides an explicit construction of coequalizers in $\Cat$ that would be worth formalizing, though the construction of general colimits as coequalizers of maps between coproducts would still suffer from the unnecessary complexity described in the introduction. A final approach that we considered would be to observe that $\Cat$ is the category of models of a finite limit theory, and thus is an essentially algebraic theory, and all such categories are locally presentable, and in particular cocomplete \cite[\S 3.D]{AdamekRosicky}. The construction of colimits given in the proof of this result is rather unwieldy \cite[3.36]{AdamekRosicky}. As noted above, our approach follows \cite[4.5.16]{Riehl:2016cc}.

\section{Future work}\label{sec:future}

There are several avenues for future work pursuing the various ways in which the formalizations undertaken here could be extended, generalized, or applied. We plan to make use of the homotopy category functor to formalize a proof of the converse implication (ii)$\Rightarrow$(i) in Proposition \ref{prop:strict-segal} by showing that a strict Segal simplicial set is canonically isomorphic, via the unit of our nerve adjunction, to the nerve of its homotopy category. At our suggestion, Nick Ward has formalized the $n$-truncated analog of strict Segal simplicial sets. 
Using this, we also plan to refactor our construction of the unit of the nerve adjunction through the abstraction boundary of strict Segal 2-truncated simplicial sets, which will allow us to consolidate, though not totally remove, the uses of \cdoc{Functor.ext}.

The proofs of Propositions \ref{prop:free-refl-quiv-adj} and \ref{prop:2-truncated-nerve-adj} are strikingly similar, and it is tempting to search for a common generalization. The adjoint triangle theorem \cite{Dubuc, Huq} constructs a left adjoint to the first functor in a composable pair whose second functor and composite both admit left adjoints, but this result makes use of a hypothesis that the domain category---$\Cat$ in both of our examples---has reflexive coequalizers, so is inapplicable here. Nevertheless, there may be some useful common abstraction that captures the essential features of the particular construction of the $\Cat$-valued left adjoint and associated quotient functor used in both proofs here.

Finally, the homotopy category functor will play a role in the task of formalizing $\infty${\nobreakdash-}category theory in Lean. Lean's \libmathlib{} contains a popular ``analytic'' definition of $\infty${\nobreakdash-}categories as quasi-categories: a \ldoc[SSet.]{Quasicategory} is a simplicial set satisfying a particular horn filling condition. In a series of PRs that are currently under review, with help from Robin Carlier, Bhavik Mehta, Thomas Murrills, Adam Topaz, Andrew Yang, and Zeyi Zhao, we have formalized a proof that the homotopy category functor preserves finite products. Applying this functor hom-wise converts the simplicially enriched category of quasi-categories to a categorically enriched category of quasi-categories, aka a strict bicategory of quasi-categories. Recent work of Riehl and Verity has shown that the core basic theory of $\infty${\nobreakdash-}categories can be developed within this strict bicategory, whose objects are $\infty${\nobreakdash-}categories, whose morphisms are $\infty${\nobreakdash-}functors, and whose 2-cells are $\infty${\nobreakdash-}natural transformations \cite{RiehlVerity:2022eo}. In particular, the existing \libmathlib{} formalization of adjunctions in a bicategory will specialize to provide the theory of adjunctions between $\infty${\nobreakdash-}categories. In a nearly completed masters thesis project, Jack McKoen is formalizing a proof that the full subcategory $\QCat$ of $\Set^{\DDelta^\op}$ defined by the quasi-categories is cartesian closed, meaning it is enriched over itself. Once this is complete, we will be able to conclude that the strict bicategory of quasi-categories is cartesian closed, allowing us to define limits and colimits in an $\infty$-category and prove that right adjoints preserve limits \cite{InfinityCosmos,RiehlVerity:2022eo}.

\bibliography{cat-has-colimits}

\begin{thebibliography}{10}

\bibitem{AdamekRosicky}
J.~Adamek and J.~Rosicky.
\newblock {\em Locally Presentable and Accessible Categories}.
\newblock London Mathematical Society Lecture Note Series. Cambridge University
  Press, 1994.

\bibitem{AKS-Univalent}
Benedikt Ahrens, Krzysztof Kapulkin, and Michael Shulman.
\newblock Univalent categories and the rezk completion.
\newblock {\em Mathematical Structures in Computer Science}, 25(5):1010--1039,
  2015.
\newblock \href {https://doi.org/10.1017/S0960129514000486}
  {\path{doi:10.1017/S0960129514000486}}.

\bibitem{BauerGrossLumsdaineShulmanSozeauSpitters2017}
Andrej Bauer, Jason Gross, Peter~LeFanu Lumsdaine, Michael Shulman, Matthieu
  Sozeau, and Bas Spitters.
\newblock The {H}o{TT} library: A formalization of homotopy type theory in
  {C}oq.
\newblock In {\em Proceedings of the 6th ACM SIGPLAN Conference on Certified
  Programs and Proofs}, CPP 2017, page 164–172, New York, NY, USA, 2017.
  Association for Computing Machinery.
\newblock \href {https://doi.org/10.1145/3018610.3018615}
  {\path{doi:10.1145/3018610.3018615}}.

\bibitem{BBP:Generalized}
Marek~A. Bednarczyk, Andrzej~M. Borzyszkowski, and Wieslaw Pawlowski.
\newblock {Generalized congruences -- Epimorphisms in Cat}.
\newblock {\em Theory and Applications of Categories}, 5(11):266--280, 1999.

\bibitem{BlumbergGepnerTabuada2013}
A.J. Blumberg, D.~Gepner, and G.~Tabuada.
\newblock A universal characterization of higher algebraic $k$-theory.
\newblock {\em Geometry {\&} Topology}, 2(17):733--838, 2013.
\newblock \href {https://doi.org/10.2140/gt.2013.17.733}
  {\path{doi:10.2140/gt.2013.17.733}}.

\bibitem{Borceux:1994hbI}
Francis Borceux.
\newblock {\em Handbook of {C}ategorical {A}lgebra 1: {B}asic {C}ategory
  {T}heory}.
\newblock Encyclopedia of Mathematics and its Applications. Cambridge
  University Press, 1994.

\bibitem{hott-in:agda}
Guillaume Brunerie, Kuen-Bang {Hou (Favonia)}, Evan Cavallo, Tim Baumann, Eric
  Finster, Jesper Cockx, Christian Sattler, Chris Jeris, Michael Shulman,
  et~al.
\newblock Homotopy type theory in {A}gda.
\newblock \url{https://github.com/HoTT/HoTT-Agda}, 2019.

\bibitem{InfinityCosmos}
Mario Carneiro, Emily Riehl, and Dominic Verity.
\newblock The $\infty$-cosmos project.
\newblock \url{https://emilyriehl.github.io/infinity-cosmos/}, 2025.

\bibitem{Dubuc}
Eduardo Dubuc.
\newblock Adjoint triangles.
\newblock In S.~MacLane, editor, {\em Reports of the Midwest Category Seminar
  II}, pages 69--91, Berlin, Heidelberg, 1968. Springer Berlin Heidelberg.

\bibitem{FarguesScholze2021}
L.~Fargues and P.~Scholze.
\newblock Geometrization of the local {L}anglands correspondence.
\newblock arXiv:{2102.13459}, 2024.

\bibitem{goerss-jardine}
P.G. Goerss and J.F. Jardine.
\newblock {\em Simplicial homotopy theory}.
\newblock Birkh{\"a}user Basel, 2009.

\bibitem{GrossChlipalaSpivak}
Jason Gross, Adam Chlipala, and David~I. Spivak.
\newblock Experience implementing a performant category-theory library in coq.
\newblock In Gerwin Klein and Ruben Gamboa, editors, {\em Interactive Theorem
  Proving}, pages 275--291, Cham, 2014. Springer International Publishing.

\bibitem{harrison1996formalized}
John Harrison.
\newblock Formalized mathematics.
\newblock Technical Report~36, Turku Centre for Computer Science (TUCS), 1996.
\newblock URL: \url{http://www.cl.cam.ac.uk/~jrh13/papers/form-math3.html}.

\bibitem{Harrison2009}
John Harrison.
\newblock Hol light: An overview.
\newblock In {\em International Conference on Theorem Proving in Higher Order
  Logics}, pages 60--66. Springer, 2009.
\newblock URL: \url{https://doi.org/10.1007/978-3-642-03359-9_4}.

\bibitem{HuCarette2021}
Jason Z.~S. Hu and Jacques Carette.
\newblock Formalizing category theory in {A}gda.
\newblock In {\em Proceedings of the 10th ACM SIGPLAN International Conference
  on Certified Programs and Proofs}, CPP 2021, page 327–342, New York, NY,
  USA, 2021. Association for Computing Machinery.
\newblock \href {https://doi.org/10.1145/3437992.3439922}
  {\path{doi:10.1145/3437992.3439922}}.

\bibitem{HS-Constructive}
Gérard Huet and Amokrane Saïbi.
\newblock Constructive category theory.
\newblock In {\em Proof, Language, and Interaction: Essays in Honour of Robin
  Milner}. The MIT Press, 05 2000.
\newblock \href {https://doi.org/10.7551/mitpress/5641.003.0015}
  {\path{doi:10.7551/mitpress/5641.003.0015}}.

\bibitem{Huq}
S.~A. Huq.
\newblock An interpolation theorem for adjoint functors.
\newblock {\em Proc. Amer. Math. Soc.}, 25:880--883, 1970.
\newblock \href {https://doi.org/10.1090/S0002-9939-1970-0260824-1}
  {\path{doi:10.1090/S0002-9939-1970-0260824-1}}.

\bibitem{KRW}
Nikolai Kudasov, Emily Riehl, and Jonathan Weinberger.
\newblock Formalizing the $\infty$-categorical {Y}oneda lemma.
\newblock In {\em Proceedings of the 13th ACM SIGPLAN International Conference
  on Certified Programs and Proofs}, CPP 2024, pages 274--290, New York, NY,
  USA, 2024. Association for Computing Machinery.
\newblock \href {https://doi.org/10.1145/3636501.3636945}
  {\path{doi:10.1145/3636501.3636945}}.

\bibitem{1Lab}
Am\'{e}lia Liao, Daniel S\"{o}lch, Jonathan Coates, Matthew McQuaid, Nathan van
  Doorn, Na\"{i}m Favier, Patrick Nicodemus, Reed Mullanix, favonia, uni,
  et~al.
\newblock 1{Lab}.
\newblock \url{https://1lab.dev/}, 2025.

\bibitem{Lurie:2018sag}
J.~Lurie.
\newblock {Spectral Algebraic Geometry}.
\newblock www.math.ias.edu/$\sim$lurie/papers/SAG-rootfile.pdf, February 2018.

\bibitem{Lurie2004}
Jacob Lurie.
\newblock {\em Derived algebraic geometry}.
\newblock PhD thesis, Massachusetts Institute of Technology, 2004.
\newblock URL: \url{http://hdl.handle.net/1721.1/30144}.

\bibitem{MacLane:1998cw}
S.~Mac~Lane.
\newblock {\em Categories for the Working Mathematician}.
\newblock Graduate Texts in Mathematics. Springer-Verlag, second edition, 1998.

\bibitem{mathlib2020}
The mathlib Community.
\newblock The lean mathematical library.
\newblock In {\em Proceedings of the 9th ACM SIGPLAN International Conference
  on Certified Programs and Proofs}, CPP 2020, page 367–381, New York, NY,
  USA, 2020. Association for Computing Machinery.
\newblock \href {https://doi.org/10.1145/3372885.3373824}
  {\path{doi:10.1145/3372885.3373824}}.

\bibitem{CZH_Foundations-AFP}
Mihails Milehins.
\newblock Category theory for {ZFC} in {HOL} {I}: Foundations: Design patterns,
  set theory, digraphs, semicategories.
\newblock {\em Archive of Formal Proofs}, September 2021.
\newblock \url{https://isa-afp.org/entries/CZH_Foundations.html}, Formal proof
  development.

\bibitem{CZH_Elementary_Categories-AFP}
Mihails Milehins.
\newblock Category theory for {ZFC} in {HOL} {II}: Elementary theory of
  1-categories.
\newblock {\em Archive of Formal Proofs}, September 2021.
\newblock \url{https://isa-afp.org/entries/CZH_Elementary_Categories.html},
  Formal proof development.

\bibitem{CZH_Universal_Constructions-AFP}
Mihails Milehins.
\newblock Category theory for {ZFC} in {HOL} {III}: Universal constructions.
\newblock {\em Archive of Formal Proofs}, September 2021.
\newblock \url{https://isa-afp.org/entries/CZH_Universal_Constructions.html},
  Formal proof development.

\bibitem{Morrison}
Kim Morrison.
\newblock Lean category theory.
\newblock \url{https://github.com/kim-em/lean-category-theory}, 2023.

\bibitem{NadlerTanaka2020}
David Nadler and Hiro~Lee Tanaka.
\newblock A stable $\infty$-category of {L}agrangian cobordisms.
\newblock {\em Advances in Mathematics}, 366:107026, 2020.
\newblock \href {https://doi.org/10.1016/j.aim.2020.107026}
  {\path{doi:10.1016/j.aim.2020.107026}}.

\bibitem{NipkoWenzelPaulson2002}
Tobias Nipkow, Markus Wenzel, and Lawrence~C Paulson.
\newblock {\em Isabelle/HOL: a proof assistant for higher-order logic}.
\newblock Springer, 2002.
\newblock URL: \url{https://doi.org/10.1007/3-540-45949-9}.

\bibitem{OKeefe2005}
Greg O'Keefe.
\newblock Towards a readable formalisation of category theory.
\newblock {\em Electronic Notes in Theoretical Computer Science}, 91:212--228,
  2004.
\newblock Proceedings of Computing: The Australasian Theory Symposium (CATS)
  2004.
\newblock \href {https://doi.org/10.1016/j.entcs.2003.12.014}
  {\path{doi:10.1016/j.entcs.2003.12.014}}.

\bibitem{Reedy}
C.L. Reedy.
\newblock Homotopy theory of model categories.
\newblock unpublished manuscript.
\newblock URL: \url{https://math.mit.edu/~psh/reedy.pdf}.

\bibitem{Riehl:2016cc}
Emily Riehl.
\newblock {\em Category Theory in Context}.
\newblock Aurora Modern Math Originals. Dover Publications, 2016.

\bibitem{RiehlVerity:2022eo}
Emily Riehl and Dominic Verity.
\newblock {\em Elements of $\infty$-Category Theory}.
\newblock Cambridge Studies in Advanced Mathematics. Cambridge University
  Press, 2022.

\bibitem{agda-unimath}
Egbert Rijke, Elisabeth Bonnevier, Jonathan Prieto-Cubides, Fredrik Bakke,
  et~al.
\newblock {The agda-unimath library}, 2025.
\newblock URL: \url{https://github.com/UniMath/agda-unimath/}.

\bibitem{VezzosiMoertbergAbel2021}
Andrea Vezzosi, Anders Mörtberg, and Andreas Abel.
\newblock Cubical agda: A dependently typed programming language with
  univalence and higher inductive types.
\newblock {\em Journal of Functional Programming}, 31:e8, 2021.
\newblock \href {https://doi.org/10.1017/S0956796821000034}
  {\path{doi:10.1017/S0956796821000034}}.

\bibitem{UniMath}
Vladimir Voevodsky, Benedikt Ahrens, Daniel Grayson, et~al.
\newblock Unimath --- a computer-checked library of univalent mathematics.
\newblock URL: \url{https://github.com/UniMath/UniMath}, \href
  {https://doi.org/10.5281/zenodo.7848572} {\path{doi:10.5281/zenodo.7848572}}.

\end{thebibliography}

\end{document}